\numberwithin{equation}{section}
\newtheorem{theo}{Theorem}[section]
\newtheorem{lemma}[theo]{Lemma}
\newtheorem{prop}[theo]{Proposition}
\newtheorem{cor}[theo]{Corollary}
\newtheorem{defi}[theo]{Definition}
\theoremstyle{definition}
\newtheorem{rem}[theo]{Remark}
\newtheorem{exa}[theo]{Example}
\newtheorem{exs}[theo]{Examples}
\newcommand{\ds}{\displaystyle}
\newcommand{\Lp}{L^p ([-1,0],X)}
\newcommand{\xf}{\bigl(\begin{smallmatrix}
                           x\\
                           f
                       \end{smallmatrix}\bigr)}
\newcommand{\Wp}{W^{1,p}([-1,0],X)}
\newcommand{\sg}{$(T(t))_{t\geq 0}$ }
\newcommand{\sgs}{$(S(t))_{t\geq 0}$ }
\newcommand{\sgu}{$(U(t))_{t\geq 0}$ }
\newcommand{\SG}{$(\cT(t))_{t\geq 0}$ }
\newcommand{\SGO}{$(\cT_0 (t))_{t\geq 0}$ }
\def\RR{{\mathbb{R}}}
\def\CC{{\mathbb{C}}}
\def\NN{{\mathbb{N}}}
\def\cL{{\mathcal{L}}}
\def\cT{{\mathcal{T}}}
\def\cS{{\mathcal{S}}}
\def\cA{{\mathcal{A}}}
\def\cE{{\mathcal{E}}}
\def\cB{{\mathcal{B}}}
\def\cU{{\mathcal{U}}}
\def\sg{\(\displaystyle{(T(t))_{t\geq 0}}\) }
\begin{document}
\title{Semigroups and linear partial differential equations with delay}
\author{Andr\'as B\'atkai
 and Susanna Piazzera}\thanks{The authors gratefully
 acknowledge support by Konrad-Adenauer-Stiftung and Consiglio Nazionale
 delle Ricerche respectively.
 They also want to thank A. Rhandi (Marrakesh) and R. Schnaubelt
 (T\"ubingen)  for helpful discussions.}
\address{Mathematisches Institut, Universit\"at
T\"ubingen, Auf der Morgenstelle 10, 72076 T\"ubingen, Germany.}
\email{anba@michelangelo.mathematik.uni-tuebingen.de}
\email{supi@michelangelo.mathematik.uni-tuebingen.de}
\subjclass{34K05, 34K20, 47D06}
\keywords{$C_0$-semigroups, delay equations, exponential stability}

\begin{abstract}
We prove the equivalence of the well-posedness of a partial differential 
equation with delay and an associated abstract Cauchy problem.
This is used to derive sufficient conditions for well-posedness, 
exponential stability and norm continuity of the solutions. 
Applications to a reaction-diffusion equation with delay are given.
\end{abstract}
\maketitle
\begin{section}{Introduction}
Partial differential equations with delay have been studied for many
years.
In an abstract way and using the standard notation (see
\cite{ku-sc} and \cite{wu})
they can be written as
\begin{equation}\tag*{\text{(DE)}}
\begin{cases}
        u'(t)=Au(t)+\Phi u_t ,&t\geq 0,\\
        u(0)=x, & \\
        u_0 =f,&
\end{cases}
\end{equation}
on a Banach space $X$, where $(A,D(A))$ is a (unbounded) linear operator on $X$
and the delay operator $\Phi$ is supposed to belong to $\cL (\Wp ,X)$.
J. Hale \cite{hale} and G. Webb \cite{webb} were among the first who applied 
semigroup theory to the study of such equations, and we refer to \cite{rhandi}
and
\cite{wu} for more recent references.
As a  first step one has to choose an appropriate state space.
One of the possibilities is to work in the space of continuous $X$-valued functions.
In this case, the relationship between solutions of (DE) and a corresponding semigroup  has
been studied widely (see for example
\cite[Section VI.6]{en-na}) and is well understood.
On the other hand, the state space $\cE :=X\times\Lp$ turns
out to be a better choice with regards to certain applications 
(to control theory, see \cite{nak1}) and will be used in this paper.
We will show in Section 2 that the linear partial differential equation with delay is equivalent to an abstract Cauchy
problem
\begin{equation}\tag*{\text{(ACP)}}
\begin{cases}
\cU '(t)=\cA\,\cU (t),\qquad t\geq 0,\\
\cU (0)=\xf,
\end{cases}
\end{equation}
on the space $\cE$. Similar results for neutral differential
equations on finite dimensional spaces $X$ were proved by F. Kappel and K. Zhang
\cite{ka-zh} (see also \cite{bu-he-st}).
In the third section we give sufficient conditions such that the
operator $(\cA ,D(\cA ))$ generates a strongly continuous
semigroup on the space $\cE$. Assuming that $(A,D(A))$ is the generator of a
strongly continuous semigroup on $X$, we show that the operator $\cA$ is given by the 
sum $\cA_0 +\cB$, where $(\cA_0 ,D(\cA_0 ))$ generates a strongly continuous
semigroup on $\cE$ and $\cB$ is $\cA_0$-bounded. Moreover, we give a condition
 on
the operators $A$ and $\Phi$ such that $\cB$ is a Miyadera perturbation of
$\cA_0$ and thus $\cA$ generates a strongly continuous semigroup on $\cE$.
Finally, for $1<p<\infty$, we show that if the operator $\Phi$ is given by the Riemann-Stieltjes
integral of a function of bounded variation $\eta :[-1,0]\longrightarrow \cL
(X)$, then $\cB$ is a Miyadera perturbation of $\cA_0$ for every generator $(A,D(A))$ 
and $\cA=\cA_0 +\cB$ generates a strongly
continuous semigroup on $\cE$. For similar equations the well-posedness was 
proved, using a completely different setting, by J. Pr\"u\ss\  in  \cite[Theorem I.1.2]{pruss}.
This part was inspired, on the one side, by a paper of K. Kunisch
and W. Schappacher \cite{ku-sc} containing necessary conditions such
that $\cA$ generates a strongly continuous semigroup on $\cE$, on
the other side, by a work of K.-J. Engel \cite{en-habil}, who used the theory
of unbounded operator matrices to treat the
case of bounded $A$.
In the fourth section, we prove that  the stability of the equation without
delay persists in the equation with delay.
We characterize the resolvent and use the theorem of Gearhart.
This part is a generalization of a recent result of A. Fischer and J. van Neerven
\cite{FvN}. In \cite[Theorem 5.1.7]{CZ} one finds a similar stability result for the finite
dimensional case.
In the last section we give a sufficient condition for the eventual 
norm continuity
of the semigroup generated by the operator $(\cA ,D(\cA ))$. We show that if
$(A,D(A))$ generates an immediately norm continuous semigroup on $X$ and
$\Phi$ satisfies a technical condition, 
then $(\cA ,D(\cA ))$ generates a strongly
 continuous semigroup which is norm continuous for $t>1$. We use these results to obtain exponential stability of the solutions.
 
Finally, we illustrate our results on a reaction-diffusion equation with delay. 
\end{section}
\begin{section}{The semigroup approach}
Consider the equation
\begin{equation}\tag*{\text{(DE)}}
\begin{cases}
        u'(t)=Au(t)+\Phi u_t ,&t\geq 0,\\
        u(0)=x, & \\
        u_0 =f,&
\end{cases}
\end{equation}
where
\begin{itemize}
\item $x\in X$, $X$ is a Banach space,
\item $A:D(A)\subseteq X\longrightarrow X$ is a closed and
densely defined linear operator,
\item $f\in L^p
([-1,0],X)$, $1\leq p <\infty$,
\item  $\Phi:W^{1,p}([-1,0],X)\longrightarrow X$ is a bounded linear 
operator,
\item $u:[-1,\infty )\longrightarrow X$ and $u_t
:[-1,0]\longrightarrow X$ is defined by $u_t (\sigma ):=u(t+\sigma )$ for 
$\sigma\in[-1,0]$.
\end{itemize}
We say that a function $u:[-1,\infty )\longrightarrow X$ is
 a {\it (classical) solution} of (DE) if
\begin{enumerate}[(i)]
\item $u\in C([-1,\infty ),X)\cap C^1 ([0,\infty ),X)$,
\item $u(t)\in D(A)$ and $u_t \in \Wp$
for all $t\geq 0$,
\item $u$ satisfies (DE) for all $t\geq 0$.
\end{enumerate}
It is now our purpose to investigate existence and uniqueness of the
solutions of (DE). To do this we introduce the Banach space
$$\cE :=X\times\Lp $$
and the operator
\begin{equation}\label{delaymatrix}
\cA :=\begin{pmatrix}
             A&\Phi\\
             0&\frac{d}{d\sigma}
        \end{pmatrix}
\end{equation}
with domain
\begin{equation}\label{delaydomain}
D(\cA ):=\left\{\xf\in D(A)\times
W^{1,p}([-1,0],X)\ :\ f(0)=x\right\}.
\end{equation}
\begin{lemma}\label{Aclosed}
The operator $(\cA ,D(\cA ))$ is closed and densely defined.
\end{lemma}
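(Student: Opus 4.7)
The plan is to verify closedness and density of $D(\cA)$ separately, relying on two standard ingredients: (i) the Sobolev embedding $\Wp\hookrightarrow C([-1,0],X)$, which makes the point evaluation $f\mapsto f(0)$ a bounded linear map; and (ii) the closedness of $A$ on $X$ together with the closedness of $\frac{d}{d\sigma}$ viewed as an operator on $\Lp$ with domain $\Wp$.

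For \emph{closedness}, I take a sequence $\binom{x_n}{f_n}\in D(\cA)$ with $\binom{x_n}{f_n}\to\binom{x}{f}$ in $\cE$ and $\cA\binom{x_n}{f_n}\to\binom{y}{g}$ in $\cE$, and must show $\binom{x}{f}\in D(\cA)$ with $\cA\binom{x}{f}=\binom{y}{g}$. First, $f_n\to f$ in $\Lp$ and $f_n'\to g$ in $\Lp$, together with the closedness of $\frac{d}{d\sigma}$, give $f\in\Wp$ and $f'=g$; consequently $f_n\to f$ in the $\Wp$-norm. The embedding in (i) then yields $f_n(0)\to f(0)$, so the constraint $x_n=f_n(0)$ passes to the limit as $x=f(0)$. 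Since $\Phi\in\cL(\Wp,X)$, we also have $\Phi f_n\to\Phi f$, hence $Ax_n=(Ax_n+\Phi f_n)-\Phi f_n\to y-\Phi f$ while $x_n\to x$; closedness of $A$ then produces $x\in D(A)$ and $Ax+\Phi f=y$, as required.

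For \emph{density}, given $\binom{x}{f}\in\cE$ and $\epsilon>0$, I would first choose $x_0\in D(A)$ with $\|x_0-x\|<\epsilon$ (using density of $D(A)$ in $X$) and approximate $f$ in $\Lp$ by some $g\in\Wp$, for instance a smooth compactly supported function. The coupling condition $g(0)=x_0$ will in general fail, and I would repair it with a cutoff correction: pick a scalar cutoff $\phi_\delta\in W^{1,p}([-1,0])$ with $\phi_\delta(0)=1$ and support in $[-\delta,0]$, and set $f_0:=g+\phi_\delta\cdot(x_0-g(0))$. Then $f_0\in\Wp$, $f_0(0)=x_0$, and $\|f_0-g\|_{L^p}\le\delta^{1/p}\|x_0-g(0)\|_X$, which tends to $0$ as $\delta\to 0$. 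Hence $\binom{x_0}{f_0}\in D(\cA)$ approximates $\binom{x}{f}$ in $\cE$ to arbitrary precision.

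The main (though still minor) subtlety is precisely this density step: the compatibility condition $f(0)=x$ built into $D(\cA)$ ties together the two independently approximated components, and the localised cutoff trick is what reconciles them without disturbing the $L^p$-approximation of the second coordinate. Everything else follows formally from the three classical facts listed above.
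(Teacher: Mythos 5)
Your proof is correct, and since the paper explicitly omits this proof as ``straightforward'', your argument (closedness via closedness of $A$ and of $\frac{d}{d\sigma}$ on $\Wp$ plus the embedding $\Wp\hookrightarrow C([-1,0],X)$, density via a cutoff correction enforcing $f(0)=x$) is exactly the standard route the authors had in mind. No gaps; the only implicit point worth stating is the normalization $0\le\phi_\delta\le1$ that yields the bound $\|\phi_\delta\|_{L^p}\le\delta^{1/p}$.
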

The proof of Lemma \ref{Aclosed} is straightforward and is omitted.
\begin{rem}
A necessary condition for (DE) to have a solution is that
$u_0 =f\in\Wp$ and $u(0)=x\in D(A)$, i.e. $\xf\in D(\cA )$.
\end{rem}
In the following, we will see that equation (DE) and the abstract Cauchy
problem associated to the operator $(\cA ,D(\cA ))$
$$\text{(ACP)}\qquad\begin{cases}
         \dot{\cU}(t)=\cA \,\cU (t),& t\geq 0,\\
     \cU (0)=\xf ,&
     \end{cases}$$
are ``equivalent",
i.e., (DE) has a unique solution for every $\xf\in D(\cA )$
 and the solutions depend continuously on the initial values
if and only if (ACP) is well-posed (in the sense of \cite[Definition II.6.8]{en-na}).
\begin{prop}\label{uniqueness}
Let $\xf\in D(\cA )$ and let $u:[-1,\infty )\longrightarrow X$ be a solution of
 (DE). Then the map
\[\RR_+ \ni t\mapsto \bigl(\begin{smallmatrix}
                               u(t)\\
                                u_t
                \end{smallmatrix}\bigr)\in\cE\]
is a classical solution of the abstract Cauchy problem (ACP) associated
to the
operator $(\cA ,D(\cA ))$ with initial value $\xf$.
\end{prop}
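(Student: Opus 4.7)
The plan is to verify, clause by clause, that the candidate curve
$\cU(t) := \bigl(\begin{smallmatrix} u(t) \\ u_t \end{smallmatrix}\bigr) \in \cE$
fulfils the definition of a classical solution of (ACP). The easy items settle quickly. The initial condition $\cU(0) = \xf$ is immediate from the data of (DE); membership $\cU(t) \in D(\cA)$ for every $t \geq 0$ follows from (ii) combined with the identity $u_t(0) = u(t+0) = u(t)$ built into the definition of $u_t$; and the first coordinate of the desired equality $\cU'(t) = \cA \cU(t)$, namely $u'(t) = A u(t) + \Phi u_t$, is exactly the first line of (DE).

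The real content is to show that $t \mapsto u_t$ is continuously differentiable from $\RR_+$ into $\Lp$, with derivative equal to $\tfrac{d}{d\sigma} u_t$, so that the second coordinate of $\cA \cU(t)$ matches the second coordinate of $\cU'(t)$. I would do this by extending $u$ into a single function $\tilde u : [-1,\infty) \to X$ defined by $\tilde u := u$. The compatibility $f(0) = x$, which holds because $\xf \in D(\cA)$, together with (i) and $f \in \Wp$, makes $\tilde u$ an element of $W^{1,p}_{\mathrm{loc}}([-1,\infty),X)$ whose weak derivative $\tilde v$ coincides with $f'$ on $[-1,0]$ and with $u'$ on $[0,\infty)$. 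Since $u_t(\sigma) = \tilde u(t+\sigma)$, the claim reduces to the differentiability of the right translation $t \mapsto \tilde u(t+\cdot)|_{[-1,0]}$ in $\Lp$; this follows from strong continuity of the translation group on $L^p$ applied to $\tilde v$, by means of the identity
\[
u_{t+h}(\sigma) - u_t(\sigma) \;=\; \int_0^h \tilde v(t+r+\sigma)\,dr
\]
and a Jensen-type estimate for $\|\cdot\|_{\Lp}^p$.

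The main obstacle is precisely this translation argument. For $0 \leq t < 1$ the function $u_t$ is the concatenation of $f\vert_{[-1,-t]}$ and $u\vert_{[0,t]}$, and its weak derivative on $[-1,0]$ sees both branches of $\tilde v$. The one ingredient that glues the two pieces into a single $L^p_{\mathrm{loc}}$ derivative, and thereby justifies the use of the translation group, is exactly the compatibility $f(0) = x$ encoded in $D(\cA)$; without it the extended function $\tilde u$ would fail to lie in $W^{1,p}_{\mathrm{loc}}$ and the Jensen-type estimate would break down at the junction $\sigma = -t$.
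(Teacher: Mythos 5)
Your proposal is correct and follows essentially the same route as the paper: both arguments reduce the problem to showing that $t\mapsto u_t$ is continuously differentiable with $\tfrac{d}{dt}u_t=\tfrac{d}{d\sigma}u_t$, and both do so by gluing $f$ and $u$ (via $f(0)=x$ and the continuity of $u$) into a single Sobolev function and exploiting translation in $L^p$. The only difference is cosmetic: the paper extends $u|_{[-1,T]}$ to some $v\in W^{1,p}(\RR,X)$ and cites that such $v$ lies in the domain of the generator of the left shift on $L^p(\RR,X)$, whereas you verify the $C^1$-property of the translates directly through the fundamental theorem of calculus and a Jensen/Minkowski estimate, which amounts to an elementary proof of the same quoted fact.
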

\begin{proof}
Since the function $u$ is a solution of (DE), we have 
$u\in C^1 ([0,\infty ),X)$,
$u(t)\in D(A)$, $u_t \in\Wp$ and $\dot{u}(t)=Au(t)+\Phi u_t$ for all
$t\geq 0$, $u(0)=x$, and
finally $u_0 =f$.
It remains to
show that the map
$t\mapsto u_t$ is continuously
differentiable for $t\geq 0$ and that
$\frac{d}{dt}u_t =\frac{d}{d\sigma } u_t$ for
all $t\geq 0$. So let $T>t\geq 0$. Then the function $u_{|[-1,T]}$
can be extended to a function $v\in W^{1,p}(\RR ,X)$, i.e., $v$ is in the 
domain
of the first derivative, which is the generator of the left shift semigroup  
on $L^p (\RR ,X)$. From the definition of generator we have
$$\frac{d}{dt}v(t+\cdot )=\frac{d}{d\sigma}v(t+\cdot )$$
in $L^p (\RR ,X)$ for $t\geq 0$. This implies that
$$\frac{d}{dt}u_t =\frac{d}{dt}u(t+\cdot )=\frac{d}{d\sigma
}u(t+\cdot )=
\frac{d}{d\sigma }u_t$$
in $\Lp$ and that 
the map $\RR_+ \ni t\mapsto \frac{d}{d\sigma }u_t \in\Lp$ is continuous.
\end{proof}
We now prove the converse to Proposition \ref{uniqueness}. Similar results with different conditions on $A$ and $\Phi$ were proved by G. Webb in \cite{webb1}.
\begin{prop}\label{abschnitte}
Let $\xf\in D(\cA )$ and $\cU :[0,\infty )\longrightarrow \cE$,
$\cU (t)=\bigl(\begin{smallmatrix}
z(t)\\
v(t)
\end{smallmatrix}\bigr)$, be a classical solution of the abstract 
Cauchy problem (ACP) with
initial value $\xf$ associated to the 
operator $(\cA ,D(\cA ))$. Let
$u:[-1,\infty )\longrightarrow X$ be the function defined by
\begin{equation}\label{solution}
u(t):=\begin{cases}
      z(t) ,&   t\geq 0,\\
      f(t),&   t\in[-1,0).
      \end{cases}
\end{equation}
Then $u_t =v(t)$ for every $t\geq 0$ and $u$ is a solution of
(DE).
\end{prop}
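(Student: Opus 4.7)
The plan is to first establish enough regularity of $u$ so that $u_t$ makes sense in $\Wp$, then to identify $u_t$ with $v(t)$ via a uniqueness argument in $\Lp$, and finally to read off the delay equation from the first component of (ACP). Writing the hypothesis $\cU(t)\in D(\cA)$ componentwise yields $z(t)\in D(A)$, $v(t)\in\Wp$ with $v(t)(0)=z(t)$, together with
\[\dot z(t)=Az(t)+\Phi v(t),\qquad \dot v(t)=\tfrac{d}{d\sigma}v(t),\]
and the initial data $z(0)=x$, $v(0)=f$. Because $z\in C^1([0,\infty),X)$, $f\in\Wp$, and the compatibility $f(0)=x=z(0)$ built into $D(\cA)$ holds, the function $u$ defined by (\ref{solution}) is continuous on $[-1,\infty)$ and belongs to $W^{1,p}([-1,T],X)$ for every $T>0$. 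In particular $u_t\in\Wp$ for every $t\geq 0$, and the extension argument used in the proof of Proposition \ref{uniqueness} shows that $t\mapsto u_t$ is continuously differentiable into $\Lp$ with $\tfrac{d}{dt}u_t=\tfrac{d}{d\sigma}u_t$.

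The main step is to show $u_t=v(t)$ for all $t\geq 0$. Setting $w(t):=u_t-v(t)\in\Wp$, one has $w(t)(0)=u(t)-v(t)(0)=z(t)-z(t)=0$, so $w(t)$ lies in the domain of the generator of the nilpotent left shift semigroup on $\Lp$ (i.e.\ $\tfrac{d}{d\sigma}$ with boundary condition $g(0)=0$). Combining the two identities above gives $\dot w(t)=\tfrac{d}{d\sigma}w(t)$ in $\Lp$, while the initial values agree: $w(0)=f-f=0$. Well-posedness of the abstract Cauchy problem associated with this (nilpotent) left shift then forces $w\equiv 0$, hence $u_t=v(t)$.

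Once the identification $u_t=v(t)$ is established, the conditions (i)--(iii) for $u$ to be a solution of (DE) follow directly: (i) since $u=z\in C^1([0,\infty),X)$ and $u$ is continuous at $0$; (ii) since $u(t)=z(t)\in D(A)$ and $u_t=v(t)\in\Wp$; (iii) since $\dot u(t)=\dot z(t)=Az(t)+\Phi v(t)=Au(t)+\Phi u_t$ with $u(0)=x$ and $u_0=f$ by construction. The principal obstacle is the identification in the middle paragraph; the delicate point is to justify $\tfrac{d}{dt}u_t=\tfrac{d}{d\sigma}u_t$ for a function $u$ obtained by gluing $f$ on $[-1,0)$ to $z$ on $[0,\infty)$, and it is precisely the matching condition $f(0)=x$ encoded in $D(\cA)$ that makes this possible.
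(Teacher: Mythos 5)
Your proof is correct and follows essentially the same route as the paper: after checking that $t\mapsto u_t$ solves the same shift Cauchy problem as $v$, you set $w(t)=u_t-v(t)$ and invoke uniqueness of classical solutions for the nilpotent left shift generator on $\Lp$ (zero boundary value and zero initial value) to get $w\equiv 0$, then read off (DE) from the first component. Your extra detail on the $W^{1,p}$ regularity of the glued function $u$ and the citation of the extension argument from Proposition \ref{uniqueness} only makes explicit what the paper leaves implicit.
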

\begin{proof}
Since $\cU$ is a classical solution of (ACP), 
 $v\in C^1\left(\RR_+,L^p([-1,0],X)\right)$ solves the Cauchy problem
\begin{equation}\label{partial}
\begin{cases}
       \frac{d}{dt}v(t)=
       \frac{d}{d\sigma }v(t),&t\geq 0,\\
       v(t)(0)=z(t),& t\geq 0,\\
       v(0)=f&
\end{cases}
\end{equation}
in the space $\Lp$. We now observe that the map
$\RR_+ \ni t\mapsto u_t \in\Lp$ solves (\ref{partial}) in the space $\Lp$.  
So let $w(t):=u_t -v(t)$
for $t\geq 0$. Then $w$ is a classical solution of the Cauchy
problem
\begin{equation}\label{partial1}
\begin{cases}
       \frac{d}{dt}w(t)=
       \frac{d}{d\sigma }w(t),& t\geq 0,\\
       w(t)(0)=0,& t\geq 0,\\
       w(0)=0.&
\end{cases}
\end{equation}
Since (\ref{partial1}) is the abstract Cauchy problem associated to the
generator of the nilpotent left shift semigroup on $\Lp$
with initial value $0$, we have that $w(t)= 0$ for all $t\geq 0$. 
Therefore $\cU (t)=\bigl(\begin{smallmatrix}
u(t)\\
u_t
\end{smallmatrix}\bigr)$ for all $t\geq 0$ and $u$ is a solution
of (DE).
\end{proof}
Let $\pi_1 :\cE\longrightarrow X$ be the projection onto the first
component of $\cE$, i.e., $\pi_1 \xf :=x$ for all $\xf\in\cE$.
\begin{cor}\label{existenz}
If $(\cA ,D(\cA ))$ is the generator of a strongly continuous semigroup \SG
on $\cE$, then equation (DE) has a
unique solution $u$ for every $\xf\in D(\cA )$, which is given by
\begin{equation}\label{solution.sem}
u(t)=
\begin{cases}
\pi_1 \bigl(\cT (t)\xf\bigr), & t\geq 0,\\
f(t), & t\in [-1,0).
\end{cases}
\end{equation}
\end{cor}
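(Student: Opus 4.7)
The plan is to combine the two propositions just proved with the standard well-posedness theory of abstract Cauchy problems, via the following three steps.

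First, assuming that $(\cA,D(\cA))$ generates a $C_0$-semigroup \SG on $\cE$, I would invoke the standard fact (see \cite[Proposition II.6.2]{en-na}) that for every initial value $\xf\in D(\cA)$ the function
\[
\cU(t):=\cT(t)\xf,\qquad t\geq 0,
\]
is the unique classical solution of (ACP). Writing $\cU(t)=\bigl(\begin{smallmatrix}z(t)\\v(t)\end{smallmatrix}\bigr)$, Proposition \ref{abschnitte} then yields a function $u:[-1,\infty)\longrightarrow X$ defined piecewise by \eqref{solution} which is a solution of (DE) and satisfies $u_t=v(t)$ and $u(t)=z(t)=\pi_1\cU(t)$ for every $t\geq 0$. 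This gives existence together with the representation formula \eqref{solution.sem}.

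Second, for uniqueness I would run Proposition \ref{uniqueness} in the reverse direction. Suppose $\tilde u$ is another solution of (DE) with the same initial value $\xf\in D(\cA)$. Then Proposition \ref{uniqueness} shows that $t\mapsto\bigl(\begin{smallmatrix}\tilde u(t)\\\tilde u_t\end{smallmatrix}\bigr)$ is a classical solution of (ACP) with the same initial value $\xf$. Since $\cA$ is a generator, the classical solution of (ACP) is unique, hence this map coincides with $\cT(t)\xf$; projecting onto the first coordinate with $\pi_1$ gives $\tilde u(t)=\pi_1\cT(t)\xf=u(t)$ for $t\geq 0$, and on $[-1,0)$ both equal $f$ by definition of a solution of (DE).

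I do not foresee any genuine obstacle: the work has already been absorbed into Propositions \ref{uniqueness} and \ref{abschnitte}, whose combination is precisely the equivalence announced in the introduction of the section. The only point worth double-checking is that the two propositions apply to \emph{every} $\xf\in D(\cA)$ (which is exactly their hypothesis) and that the piecewise definition \eqref{solution} is consistent at $t=0$, i.e. $z(0)=x=f(0)$, but this is built into the definition \eqref{delaydomain} of $D(\cA)$.
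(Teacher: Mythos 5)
Your proposal is correct and follows exactly the paper's argument: existence and the representation formula come from the unique classical solution $\cU(t)=\cT(t)\xf$ of (ACP) together with Proposition \ref{abschnitte}, and uniqueness comes from Proposition \ref{uniqueness} combined with uniqueness of classical solutions for a generator. Your write-up merely spells out the uniqueness step in more detail than the paper does.
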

\begin{proof}
Since $(\cA ,D(\cA ))$ is the generator of the strongly continuous
semigroup \SG , the Cauchy problem (ACP) has a unique classical
solution $\cU$ for all $\xf\in D(\cA )$ and
$$\cU (t)=\cT (t)\xf ,\qquad t\geq 0.$$
Then the function $u$ defined in
(\ref{solution.sem}) is a solution of (DE) by Proposition \ref{abschnitte}. 
Uniqueness follows from
Proposition \ref{uniqueness}.
\end{proof}
\begin{cor}
If $(\cA ,D(\cA ))$ is the generator of a strongly continuous semigroup 
$(\cT (t))_{t\geq 0}$, the 
function $u:[-1,\infty )\longrightarrow X$
defined by (\ref{solution.sem}) for a given $\xf\in\cE$ satisfies the integral 
equation
\begin{equation}\label{mildsolution}
u(t)=
\begin{cases}
x+A\int_0^t u(s)\,ds+\Phi \int_0^t u_s \,ds,\qquad &t\geq 0,\\
f(t), & \text{a.e. }t\in[-1,0).
\end{cases}
\end{equation}
\end{cor}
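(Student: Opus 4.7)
The plan is to exploit the standard relation for $C_0$-semigroups: for every $\xf\in\cE$ and every $t\ge 0$, $\int_0^t \cT(s)\xf\,ds\in D(\cA)$ and $\cA\!\int_0^t \cT(s)\xf\,ds=\cT(t)\xf-\xf$. Write $\cT(s)\xf=\bigl(\begin{smallmatrix}u(s)\\ v(s)\end{smallmatrix}\bigr)$, so that the first coordinate matches (\ref{solution.sem}) for $s\ge 0$ and $v(s):=\pi_2(\cT(s)\xf)\in\Lp$. Since the coordinate projections are bounded they commute with Bochner integrals, so $\int_0^t\cT(s)\xf\,ds=\bigl(\begin{smallmatrix}\int_0^t u(s)\,ds\\ \int_0^t v(s)\,ds\end{smallmatrix}\bigr)$. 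Using the explicit form (\ref{delaymatrix})--(\ref{delaydomain}) of $\cA$, the first coordinate of the identity $\cA\!\int_0^t\cT(s)\xf\,ds=\cT(t)\xf-\xf$ becomes
\[
u(t)-x \;=\; A\int_0^t u(s)\,ds+\Phi\int_0^t v(s)\,ds,
\]
in which $\int_0^t v(s)\,ds\in\Wp$ because the integral lies in $D(\cA)$, so $\Phi$ is legitimately applied. Thus the desired equation (\ref{mildsolution}) reduces to the identification $v(s)=u_s$ in $\Lp$ for every $s\ge 0$.

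For $\xf\in D(\cA)$ this identification is exactly Proposition \ref{abschnitte}, so the remaining task is a density argument to extend it to arbitrary $\xf\in\cE$. I would pick a sequence $\binom{x_n}{f_n}\in D(\cA)$ converging to $\xf$ in $\cE$ and write $u_n$ for the associated function (\ref{solution.sem}); by Proposition \ref{abschnitte}, $\pi_2(\cT(s)\binom{x_n}{f_n})=(u_n)_s$ for all $s\ge 0$. Strong continuity of the semigroup gives $\pi_2(\cT(s)\binom{x_n}{f_n})\to v(s)$ in $\Lp$, and a change of variables
\[
\|(u_n)_s-u_s\|_{\Lp}^p=\int_{s-1}^{s}\|u_n(\tau)-u(\tau)\|_X^p\,d\tau
\]
shows the right-hand side also converges to $u_s$: on $[\max(s-1,0),s]$ the integrand is dominated uniformly by $\sup_{0\le\tau\le s}\|\cT(\tau)\|^p\,\|\binom{x_n-x}{f_n-f}\|_{\cE}^p$, and on the remaining subinterval (present only when $s<1$) it equals $\|f_n(\tau)-f(\tau)\|_X^p$, which is controlled by $\|f_n-f\|_{\Lp}^p$. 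Hence $v(s)=u_s$ in $\Lp$ for each $s\ge 0$, and (\ref{mildsolution}) follows for $t\ge 0$; the case $t\in[-1,0)$ is immediate from the definition (\ref{solution.sem}).

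The only nonroutine step is the identification $v(s)=u_s$ for initial data outside $D(\cA)$, i.e.\ the passage to the limit for the shifted functions; everything else is a direct reading of the standard semigroup integral formula through the block structure of $\cA$.
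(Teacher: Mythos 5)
Your proposal is correct and follows essentially the same route as the paper: identify $\pi_2\bigl(\cT(s)\xf\bigr)=u_s$ via Proposition \ref{abschnitte} together with the density of $D(\cA)$ in $\cE$, and then read off the first component of the identity $\cT(t)\xf-\xf=\cA\int_0^t\cT(s)\xf\,ds$. The only difference is that you spell out the density/limit argument for the shifted functions explicitly (the change of variables and the split of $[s-1,s]$), which the paper leaves implicit; one cosmetic remark is that the convergence $\pi_2\bigl(\cT(s)\binom{x_n}{f_n}\bigr)\to v(s)$ uses boundedness of the fixed operator $\cT(s)$ rather than strong continuity.
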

\begin{proof}
Let $\pi_2 :\cE\longrightarrow \Lp$ be the
projection onto the second component of $\cE$, i.e., $\pi_2 \xf :=f$ for all
$\xf\in\cE$.\\
Then, by Proposition \ref{abschnitte} and using the density of $D(\cA )$
in $\cE$, we obtain $u_t =\pi_2 \bigl(\cT (t)\xf \bigr)$ for all $\xf\in\cE$ 
and $t\geq
0$.
Take now  the first component of the identity
$$\cT (t)\xf -\xf =\cA \int_0^t \cT (s)\xf\,ds, \qquad t\geq 0,$$
to obtain (\ref{mildsolution}).
\end{proof}
A solution of the integral equation (\ref{mildsolution}) is called 
{\it mild solution} in the literature, see for example \cite[Section 2.2]{nak2}.
At this point we need the appropriate terminology.
\begin{defi}
We call (DE) {\bf well-posed} if
\begin{enumerate}[(i)]
\item for every $\xf\in D(\cA )$ there is a unique solution $u(x,f,\cdot )$ 
of (DE) and
\item the solutions depend continuously on the initial values, i.e.,
if a sequence $\bigl(\begin{smallmatrix}
                                               x_n\\
                                               f_n
                                          \end{smallmatrix}\bigr)$
in $D(\cA )$ converges to $\xf\in D(\cA )$, then $u(x_n ,f_n ,t)$
converges
to $u(x,f,t)$ uniformly for $t$ in compact intervals.
\end{enumerate}
\end{defi}
This well-posedness of (DE) can now be characterized by the well-posedness of
the abstract Cauchy problem (ACP) for the operator $(\cA ,D(\cA ))$. 
\begin{theo}\label{caratterizzazione}
Let $(\cA ,D(\cA ))$ be the operator defined by (\ref{delaymatrix}) and
(\ref{delaydomain}). Then the following assertions are equivalent.
\begin{enumerate}[(i)]
\item (DE) is well-posed.
\item $(\cA ,D(\cA ))$ is the generator of a strongly continuous semigroup
on $\cE$.
\end{enumerate}
\end{theo}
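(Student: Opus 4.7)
The plan is to use the one-to-one correspondence between solutions of (DE) and classical solutions of (ACP) already proved in Propositions \ref{uniqueness} and \ref{abschnitte}, and then apply the standard characterization of $C_0$-semigroup generators via well-posed Cauchy problems, e.g.\ \cite[Theorem II.6.7]{en-na}. The prerequisites needed there --- that $(\cA,D(\cA))$ is closed and densely defined --- are furnished by Lemma \ref{Aclosed}.

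For the implication (ii) $\Rightarrow$ (i), Corollary \ref{existenz} already produces, for each $\xf\in D(\cA)$, the unique solution $u(x,f,\cdot)$ of (DE) in the form $u(x,f,t)=\pi_1(\cT(t)\xf)$. Continuous dependence follows because the semigroup is uniformly bounded on each compact interval $[0,T]$: if $\bigl(\begin{smallmatrix}x_n\\f_n\end{smallmatrix}\bigr)\to\xf$ in $\cE$, then $\cT(t)\bigl(\begin{smallmatrix}x_n\\f_n\end{smallmatrix}\bigr)\to\cT(t)\xf$ uniformly on $[0,T]$ in $\cE$, and post-composing with the bounded projection $\pi_1$ gives the required uniform convergence in $X$.

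For the converse (i) $\Rightarrow$ (ii), I define, for $\xf\in D(\cA)$ and $t\geq 0$,
\[ \cT(t)\xf := \bigl(\begin{smallmatrix} u(x,f,t) \\ u_t(x,f,\cdot) \end{smallmatrix}\bigr), \]
where $u(x,f,\cdot)$ is the unique solution of (DE) guaranteed by well-posedness. Proposition \ref{uniqueness} shows that $t\mapsto \cT(t)\xf$ is a classical solution of (ACP) with initial datum $\xf$, and linearity on $D(\cA)$ is inherited from the linearity of (DE). Uniqueness of (ACP)-solutions on $D(\cA)$ is read off from Proposition \ref{abschnitte}: a second classical solution of (ACP) would yield a second solution of (DE) with the same initial data, contradicting (i). This uniqueness then propagates the semigroup identity $\cT(t+s)\xf=\cT(t)\cT(s)\xf$ on $D(\cA)$, both sides being classical solutions of (ACP) with initial datum $\cT(s)\xf\in D(\cA)$.

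The main obstacle is transferring continuous dependence from (DE) to (ACP), since (i) controls only the $X$-component $u(x,f,t)$ while (ACP) demands convergence in $\cE$. The second coordinate $u_t$, however, is determined by the first on $[t-1,t]$ together with the initial datum $f$: for $t\geq 1$ a change of variables converts uniform $X$-convergence on $[0,t]$ into $L^p$-convergence of $u_t$, while for $t\in[0,1)$ one combines the portion on $[-1,0)$, where $u=f$ and $f_n\to f$ in $L^p$, with that on $[0,t]$. This yields $\cT(t)\bigl(\begin{smallmatrix}x_n\\f_n\end{smallmatrix}\bigr)\to\cT(t)\xf$ in $\cE$ uniformly on compact intervals. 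A uniform boundedness argument on the Banach spaces $\cE$ and $C([0,T],\cE)$ then produces $\|\cT(t)\xf\|_{\cE}\leq M_T\|\xf\|_{\cE}$ for $\xf\in D(\cA)$ and $t\in[0,T]$, and Lemma \ref{Aclosed} lets me extend each $\cT(t)$ by density to a bounded operator on $\cE$. The semigroup law and strong continuity transfer by density, and closedness of $\cA$ together with the fact that $t\mapsto\cT(t)\xf$ is differentiable at $0$ with derivative $\cA\xf$ for $\xf\in D(\cA)$ identifies the generator as $(\cA,D(\cA))$, establishing (ii).
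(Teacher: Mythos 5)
Your proof is correct and in substance follows the same route as the paper: the solution correspondence of Propositions \ref{uniqueness} and \ref{abschnitte}, together with Lemma \ref{Aclosed}, reduces the theorem to the standard equivalence between well-posedness of (ACP) and the generation property, and the direction (ii)$\Rightarrow$(i) is handled exactly as in the paper via Corollary \ref{existenz} and the boundedness of $\pi_1$. What you add is worthwhile: the paper passes over the transfer of continuous dependence with ``it is easy to see'', whereas you make explicit that the history component $u_t$ is controlled on $[\max(t-1,0),t]$ by the uniform $X$-convergence of the solutions and on the remaining piece by the $L^p$-convergence $f_n\to f$, which is precisely the argument needed to get convergence of $\bigl(\begin{smallmatrix}u(t)\\ u_t\end{smallmatrix}\bigr)$ in $\cE$ uniformly on compact intervals; similarly, your boundedness-and-density construction of $\cT(t)$ is just an unfolding of the cited textbook theorem. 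One caution about the last step of that hand-made construction: differentiability of $t\mapsto\cT(t)\xf$ at $0$ with derivative $\cA\xf$ for $\xf\in D(\cA)$ only yields $\cA\subseteq\cG$ for the generator $\cG$, and closedness of $\cA$ by itself does not exclude a proper closed extension; to conclude $\cG=\cA$ you should add that $D(\cA)$ is invariant under $\cT(t)$ (classical solutions remain in $D(\cA)$), hence is a core for $\cG$, so that $\cG$ coincides with the closure of $\cA$ on $D(\cA)$, which is $\cA$ itself --- or simply invoke \cite[Theorem II.6.7]{en-na} as you announce at the outset, which is exactly what the paper does.
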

\begin{proof}
We first show $(i)\Rightarrow (ii)$. Assume that for every $\xf\in D(\cA )$
equation (DE) has a unique solution $u$. Then Proposition
\ref{uniqueness} yields that for every $\xf\in D(\cA )$ the abstract 
Cauchy problem (ACP) has a classical solution which is unique
by Proposition \ref{abschnitte}. It is easy to see that these solutions depend
continuously on the initial values. 
Finally, by Lemma \ref{Aclosed}, $(\cA ,D(\cA ))$ is a closed and densely defined operator.
So $(\cA,D(\cA ))$ generates a strongly continuous semigroup
on $\cE$ by \cite[Theorem II.6.7]{en-na}.
Conversely, if $\cA$ is a generator, we have by Corollary \ref{existenz}
that for every initial value
$\xf\in D(\cA )$ there is a unique solution $u$ of
(DE) which is given by (\ref{solution.sem}). This implies that the
solutions
depend continuously on the initial values.
\end{proof}
\end{section}
\begin{section}{The generator property}
In the previous section we transformed the problem of solving the partial 
differential equation with delay 
(DE) into the functional analytical problem: When does $\cA$ generate a strongly
 continuous semigroup on $\cE$?
 
In the following, we will give sufficient conditions on $A$ and $\Phi$ 
such that this is true. 
First, we observe that
we can write $\cA$ as the sum $\cA_0 +\cB$, where
\begin{equation}\label{delaymatrix0}
\cA_0 :=\begin{pmatrix}
             A&0\\
             0&\frac{d}{d\sigma}
          \end{pmatrix}
\end{equation}
with domain
\begin{equation}\label{delaydomain0}
D(\cA_0 ):=D(\cA )=\left\{ \xf\in D(A)\times \Wp\ :\ f(0)=x\right\}
\end{equation}
and 
\[\cB :=\begin{pmatrix}
             0&\Phi\\
             0&0
        \end{pmatrix}\in\cL (D(\cA_0 ),\cE ).\]
The idea now is to show first that under appropriate conditions $\cA_0$ becomes a
generator and then apply perturbation results to show that the
sum $\cA_0 +\cB$ is a generator as well. The first step is quite easy.
\begin{prop}
Let $(A,D(A))$ be the generator of a strongly continuous semigroup \sgs on $X$.
Then
$(\cA_0 ,D(\cA_0 ))$ generates the strongly continuous semigroup
$(\cT_0 (t))_{t\geq 0}$ on $\cE$ given by
\begin{equation}\label{unperturbeddelaysemigroup}
\cT_0 (t):=\begin{pmatrix}
             S(t)&0\\
             S_t &T_0 (t)
        \end{pmatrix},
\end{equation}
where $(T_0 (t))_{t\geq 0}$ is the nilpotent left shift semigroup on
$L^p ([-1,0],X)$ and $S_t :X\rightarrow L^p ([-1,0],X)$ is
defined by
\[(S_t \,x)(\tau ):=\begin{cases}
                              S(t+\tau )x, & -t<\tau\leq 0,\\
                              0,                &-1\leq\tau \leq -t.
                              \end{cases}\]
\end{prop}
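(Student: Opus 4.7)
My plan is to verify directly that the prescribed matrix family is a $C_0$-semigroup on $\cE$ and then identify its generator as $(\cA_0,D(\cA_0))$.

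For the semigroup law $\cT_0(t+s)=\cT_0(t)\cT_0(s)$, expanding the block-matrix product reduces the diagonal entries to the semigroup properties of $(S(t))_{t\geq 0}$ on $X$ and of the nilpotent left shift $(T_0(t))_{t\geq 0}$ on $\Lp$. The only nontrivial identity is the $(2,1)$ entry $S_tS(s)+T_0(t)S_s=S_{t+s}$. I would verify it by a case distinction on $\tau\in[-1,0]$: on $(-t,0]$ only $S_tS(s)x$ contributes, on $(-s-t,-t]$ only $T_0(t)S_sx$ contributes, and both evaluate to $S(t+s+\tau)x$, while on $[-1,-s-t]$ both terms vanish, which is exactly $S_{t+s}x$. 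Strong continuity at $0$ is quick: $S(t)x\to x$ by hypothesis, $T_0(t)f\to f$ is standard, and $\|S_tx\|_{\Lp}^p\le M^p\,t\,\|x\|^p$ with $M:=\sup_{s\in[0,1]}\|S(s)\|$ handles the off-diagonal term.

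The main work is identifying the generator $\cG$ of $(\cT_0(t))_{t\geq 0}$. First I would show $\cA_0\subseteq\cG$ by computing $\lim_{t\to 0^+}t^{-1}(\cT_0(t)\xf-\xf)$ for $\xf\in D(\cA_0)$. The first coordinate converges to $Ax$ because $x\in D(A)$. For the second coordinate set $h_t:=S_tx+T_0(t)f-f$ and split $[-1,0]=[-1,-t]\cup(-t,0]$. On $[-1,-t]$ one has $h_t(\tau)/t=(f(\tau+t)-f(\tau))/t$, which converges to $f'(\tau)$ in $\Lp$ by a standard Fubini/Hölder argument using $f\in\Wp$. On the shrinking interval $(-t,0]$ the matching condition $f(0)=x$ is crucial: writing $h_t(\tau)=(S(t+\tau)x-x)+(x-f(\tau))$ and using $x\in D(A)$ together with $f\in\Wp$, a Hölder estimate yields
\[
\|h_t/t\|_{\Lp((-t,0])}\le t^{1/p}M\|Ax\|+\|f'\|_{\Lp((-t,0])},
\]
which vanishes together with $\|f'\|_{\Lp((-t,0])}$ as $t\to 0^+$. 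Hence $h_t/t\to f'$ in $\Lp$ and the second coordinate tends to $\tfrac{d}{d\sigma}f$, so $\cA_0\subseteq\cG$.

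For the reverse inclusion I would exhibit a common point of $\rho(\cA_0)\cap\rho(\cG)$: given $\xi=\bigl(\begin{smallmatrix}y\\ g\end{smallmatrix}\bigr)\in\cE$ and $\lambda$ with $\mathrm{Re}\,\lambda$ sufficiently large (so that $\lambda\in\rho(A)\cap\rho(\cG)$), the resolvent equation $(\lambda-\cA_0)\xf=\xi$ decouples into $(\lambda-A)x=y$ and the ODE $\lambda f-f'=g$ with boundary condition $f(0)=x$, both of which are solved explicitly; the resulting $\xf$ lies in $D(\cA_0)$ and injectivity is immediate. Together with $\cA_0\subseteq\cG$ and $\lambda\in\rho(\cG)$, this forces $\cA_0=\cG$. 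The main obstacle is the $\Lp$-limit on the shrinking interval $(-t,0]$: without the condition $f(0)=x$ the boundary contribution $S_tx$ would not cancel the deficit of $f$ near zero, and the Hölder estimate combining $x\in D(A)$ and $f\in\Wp$ is precisely what makes this cancellation quantitative.
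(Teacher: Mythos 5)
Your proposal is correct, and since the paper omits the proof of this proposition entirely (declaring it ``quite easy''), your direct verification is precisely the standard argument it leaves to the reader: the block-matrix semigroup law reducing to the identity $S_tS(s)+T_0(t)S_s=S_{t+s}$, strong continuity via $\|S_tx\|_{\Lp}\le M t^{1/p}\|x\|$, the inclusion $\cA_0\subseteq\cG$ from the difference quotient where the matching condition $f(0)=x$ and $x\in D(A)$ control the shrinking interval $(-t,0]$, and equality of the generators from surjectivity of $\lambda-\cA_0$ for $\mathrm{Re}\,\lambda$ large together with $\lambda\in\rho(\cG)$. All estimates check out (including the case $p=1$, where $1/p'=0$), so nothing further is needed.
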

Therefore, we will always assume that
\[(A,D(A))\text{ {\it generates a strongly continuous semigroup
\sgs on }}X.\]
We will see later in Remark \ref{necessary} that this condition is necessary for the well-posedness in case of many applications .
To the perturbation $\cB$ we will apply the theorem of Miyadera-Voigt (see \cite{miy} and \cite{voigt1}), which we quote from \cite[Corollary III.3.16]{en-na}.
\begin{theo}\label{theo.miyadera-voigt}
Let $(G,D(G))$ be the generator of a strongly continuous semigroup
\sg on a Banach
space $X$ and let $C\in\cL ((D(G),\|\cdot\|_G ),X)$ satisfy
\begin{equation}\label{m-v}
\int_0^{t_0}\|CT(r)x\|\,dr\leq q\|x\|\qquad\mbox{for all }x\in D(G)
\end{equation}
and some $t_0>0$, $0\leq q<1$. Then $(G+C,D(G))$ generates a strongly continuous
semigroup \sgu on $X$ which satisfies
$$U(t)x=T(t)x+\int_0^t T(t-s)CU(s)x\,ds\qquad\mbox{and}$$
$$\int_0^{t_0}\|CU(t)x\|\,dt\leq \frac{q}{1-q}\|x\|\qquad\mbox{for }x\in
D(G)\mbox{ and }t\geq 0.$$
\end{theo}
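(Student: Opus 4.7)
The plan is to construct the perturbed semigroup $(U(t))_{t\geq 0}$ via a Dyson--Phillips expansion and then identify its generator as $G+C$.

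First, for $x\in D(G)$ I would define inductively $U_0(t)x := T(t)x$ and
\[
U_{n+1}(t)x := \int_0^t T(t-s)\,C\,U_n(s)x\,ds,\qquad n\geq 0,
\]
noting that $T(t)$ maps $D(G)$ into $D(G)$, so each $CU_n(s)x$ is well defined. The Miyadera hypothesis (\ref{m-v}) together with an induction based on Fubini and the semigroup property of $(T(t))$ should yield the crucial bound
\[
\int_0^{t_0}\|C\,U_n(r)x\|\,dr \leq q^{n+1}\|x\|\qquad\text{for all }x\in D(G).
\]
Combining this with the exponential bound $\|T(t)\|\leq Me^{\omega t}$ gives uniform estimates on $\|U_n(t)x\|$ on compact intervals, and the series $U(t)x := \sum_{n\geq 0}U_n(t)x$ converges on $D(G)$. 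The resulting family is uniformly bounded on compacta, hence extends by density to a family of bounded operators on $X$, still denoted $(U(t))_{t\geq 0}$, satisfying the second estimate in the statement.

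Second, I would verify that $(U(t))_{t\geq 0}$ is a strongly continuous semigroup and that it satisfies the variation--of--parameters identity
\[
U(t)x = T(t)x + \int_0^t T(t-s)\,C\,U(s)x\,ds.
\]
Strong continuity at $0$ follows from the corresponding property of $(T(t))$ plus the decay estimate on the tail of the series. The semigroup law $U(t+s)=U(t)U(s)$ reduces, after rearrangement of the Dyson--Phillips series, to Fubini applied to double convolutions and the semigroup property of $(T(t))$. The integral identity is just the telescoping sum of the recursion.

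Third, to identify the generator $(H,D(H))$ of $(U(t))_{t\geq 0}$, I would take $x\in D(G)$, use the integral identity to write
\[
\frac{U(t)x-x}{t} = \frac{T(t)x-x}{t} + \frac{1}{t}\int_0^t T(t-s)\,C\,U(s)x\,ds,
\]
and let $t\to 0^+$, obtaining $Hx = Gx+Cx$; hence $D(G)\subseteq D(H)$ and $H$ extends $G+C$. For the converse inclusion one chooses $\lambda$ so large that $\int_0^{t_0}\|Ce^{-\lambda r}T(r)x\|\,dr<1/(1+\text{const})$, and rewrites the Laplace transform of the integral identity as the Neumann series
\[
R(\lambda,H) = R(\lambda,G)\sum_{n\geq 0}\bigl(CR(\lambda,G)\bigr)^n,
\]
whose image is exactly $D(G)$, so $D(H)=D(G)$ and $H=G+C$.

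The main obstacle is the inductive estimate on $\int_0^{t_0}\|CU_n(r)x\|\,dr$: it requires iterating the Miyadera bound together with Fubini while keeping track of the fact that $CU_n$ is only \emph{a priori} defined on $D(G)$, and then extending the whole construction to $X$ via a uniform--in--$n$ bound that is delicate precisely because $C$ is unbounded. The subsidiary technical point is justifying the differentiation under the integral sign when identifying $H$ on $D(G)$, which needs the continuity of $s\mapsto CU(s)x$ on $D(G)$ rather than just its integrability.
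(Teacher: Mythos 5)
Note first that the paper does not prove this theorem at all: it is quoted verbatim from \cite[Corollary III.3.16]{en-na} (going back to \cite{miy} and \cite{voigt1}), so your proposal has to be measured against that standard proof rather than against an argument in the paper itself.

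Measured this way, your Dyson--Phillips strategy is the right general idea, but the way you set up the iteration contains a genuine gap. You define $U_{n+1}(t)x:=\int_0^t T(t-s)\,C\,U_n(s)x\,ds$ and justify the expression $CU_n(s)x$ by remarking that $T(t)$ leaves $D(G)$ invariant; that remark only covers $n=0$. Already for $n=1$ the vector $U_1(s)x=\int_0^s T(s-r)\,CT(r)x\,dr$ is a mild convolution whose inhomogeneity $r\mapsto CT(r)x$ is merely continuous with values in $X$ (neither $D(G)$-valued nor differentiable for general $x\in D(G)$), and such a convolution need not lie in $D(G)$. Hence $CU_1(s)x$, and with it your key inductive bound $\int_0^{t_0}\|CU_n(r)x\|\,dr\le q^{n+1}\|x\|$, is not even well defined as written; this is exactly the difficulty you flag at the end but do not resolve, and it is not a technicality: it is the reason the classical proofs iterate on the other side. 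There one sets $U_{n+1}(t)x:=\int_0^t U_n(t-s)\,CT(s)x\,ds$ (in \cite{en-na} via the abstract Volterra operator $(VF)(t)x=\int_0^t F(t-s)\,CT(s)x\,ds$ on the space of strongly continuous operator-valued functions on $[0,t_0]$ with the sup-norm), so that the unbounded operator $C$ only ever acts on $T(s)x$ with $x\in D(G)$; condition (\ref{m-v}) then shows each term extends to a bounded operator, gives $\|V^n\|\le q^n$, and the series $\sum_n (V^nT)(t)$ converges and defines the semigroup, whose generator is identified with $G+C$ for large $\Re\lambda$ essentially by the resolvent argument you sketch. Only afterwards, knowing that $U(t)$ leaves $D(G+C)=D(G)$ invariant and that $t\mapsto U(t)x$ is continuous for the graph norm, does one obtain the variation-of-parameters identity with $CU(s)x$ and the estimate $\int_0^{t_0}\|CU(t)x\|\,dt\le\frac{q}{1-q}\|x\|$ stated in the theorem; in your plan these two formulas are used as the definition of the scheme, which is what breaks down. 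So the skeleton is right, but the convolution must be reversed and the statements involving $CU$ must be derived at the end rather than assumed at the start.
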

In our situation, $(\cA ,D(\cA ))$ generates a
strongly continuous semigroup on $\cE$ if
there exist $t_0 >0$ and $0\leq q<1$ such that
\begin{equation}\label{miyadera.puro}
\int_0^{t_0} \left\|\cB\,\cT_0 (r)\xf\right\|\,dr
\leq q \left\|\xf
\right\| 
\end{equation}
for all $\xf\in D(\cA_0 )$. However, since
\begin{equation*}
\begin{split}
\int_0^{t_0} \left\|\cB\,\cT_0 (r)\xf\right\|\,dr &=
\int_0^{t_0} \left\| \begin{pmatrix}
             0&\Phi\\
             0&0
        \end{pmatrix}\,
\begin{pmatrix}
             S(r)&0\\
             S_r &T_0 (r)
        \end{pmatrix}\,\begin{pmatrix}
                         x\\
                         f
                       \end{pmatrix}
\right\| \, dr\\
&= \int_0^{t_0} \|\Phi (S_r x + T_0 (r)f) \|\,dr\,,
\end{split}
\end{equation*}
we conclude that (\ref{miyadera.puro}) holds if and only if there exist 
$t_0 >0$ and $0\leq q<1$ such that
\begin{equation}\tag*{(M)}
\qquad
\int_0^{t_0} \|\Phi (S_r \,x + T_0 (r)f) \|\,dr
\leq q\,\left\|\xf\right\|
\end{equation}
for all $\xf\in D(\cA_0 )$. We therefore obtain the following result.
\begin{theo}
Let $(A,D(A))$ be the generator of a strongly continuous semigroup on $X$
and let condition (M) be satisfied. Then
the operator $(\cA ,D(\cA ))$ is the generator of a strongly
continuous semigroup on $\cE$. Thus, (DE) is well-posed.
\end{theo}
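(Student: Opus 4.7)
The plan is essentially to assemble the pieces that have already been put in place just before the theorem. The proposition preceding it shows that under the assumption that $(A,D(A))$ generates the $C_0$-semigroup \sgs on $X$, the operator $(\cA_0, D(\cA_0))$ generates the strongly continuous semigroup $(\cT_0 (t))_{t\geq 0}$ on $\cE$ given by (\ref{unperturbeddelaysemigroup}). Moreover, $\cB \in \cL(D(\cA_0),\cE)$. Therefore it suffices to verify that $\cB$ is a Miyadera--Voigt perturbation of $\cA_0$, and invoke Theorem \ref{theo.miyadera-voigt} with $G := \cA_0$ and $C := \cB$.

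First I would carry out the explicit computation of $\cB\,\cT_0(r)\xf$ by multiplying the two block matrices: using (\ref{unperturbeddelaysemigroup}), the second row of $\cT_0(r)$ applied to $\xf$ is $S_r x + T_0(r) f$, and composition with $\cB$ projects this onto its $\Phi$-image in the first coordinate. This is precisely the display preceding the theorem, and it shows
$$\int_0^{t_0}\bigl\|\cB\,\cT_0(r)\xf\bigr\|\,dr \;=\; \int_0^{t_0}\bigl\|\Phi\bigl(S_r x + T_0(r) f\bigr)\bigr\|\,dr$$
for every $\xf \in D(\cA_0)$. Hence condition (M) is literally the Miyadera--Voigt hypothesis (\ref{m-v}) for $G=\cA_0$ and $C=\cB$ with the same $t_0$ and $q$.

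Theorem \ref{theo.miyadera-voigt} then yields that $(\cA_0 + \cB, D(\cA_0))$ generates a strongly continuous semigroup on $\cE$; since $D(\cA_0) = D(\cA)$ by (\ref{delaydomain0}) and $\cA_0 + \cB = \cA$ pointwise on this common domain, this semigroup is generated by $(\cA,D(\cA))$ itself. Well-posedness of (DE) is then immediate from the equivalence established in Theorem \ref{caratterizzazione}. The argument is therefore pure bookkeeping, and there is no genuine obstacle once the reduction of the Miyadera integral for $\cB$ to the scalar integral appearing in (M) has been carried out; the closedness and density of $\cA_0$ needed to apply Theorem \ref{theo.miyadera-voigt} are already guaranteed by Lemma \ref{Aclosed} together with $D(\cA_0)=D(\cA)$.
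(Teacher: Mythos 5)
Your proposal is correct and follows exactly the paper's route: reduce $\int_0^{t_0}\|\cB\,\cT_0(r)\xf\|\,dr$ to $\int_0^{t_0}\|\Phi(S_r x+T_0(r)f)\|\,dr$ via the block-matrix computation, identify condition (M) with the Miyadera--Voigt hypothesis for $G=\cA_0$, $C=\cB$, apply Theorem \ref{theo.miyadera-voigt}, and conclude well-posedness from Theorem \ref{caratterizzazione}. No gaps; this is precisely how the theorem is obtained in the paper.
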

We now give some important examles of $\Phi$ to satisfy this condition (M).
\begin{exs}\label{esempi}
\begin{enumerate}[(a)]
\item Let $\Phi$ be  bounded from $L^p ([-1,0],X)$ to $X$. Then
the perturbation $\cB$ is bounded, and $(\cA ,D(\cA
))$ is a generator on $\cE$.
\item Let $1<p<\infty$ and let $\eta :[-1,0]\rightarrow \cL (X)$
be of bounded variation. 
Let $\Phi :C([-1,0],X)\rightarrow X$ be the bounded linear
operator given by the Riemann-Stieltjes integral
\begin{equation}\label{phi}
\Phi
(f):=\int_{-1}^0 d\eta\,f \qquad\mbox{for all } f\in C([-1,0],X).
\end{equation}
Since $W^{1,p}([-1,0],X)$ is
continuously embedded in $C([-1,0],X)$, $\Phi$ defines a bounded
operator from $W^{1,p}([-1,0],X)$ to $X$. For
$0<t<1$ we obtain that
\begin{equation*}
\begin{split}
\int_0^t \|\Phi (S_r \,x + T_0 (r)f) \|\,dr
&=\int_0^t \left\|\int_{-1}^{-r}d\eta (\sigma )\,f(r+\sigma )+
   \int_{-r}^0 d\eta (\sigma )\,S(r+\sigma )x\right\|\,dr\\
&\leq \int_0^t \int_{-1}^{-r}\|f(r+\sigma )\|\,d|\eta |(\sigma )\,dr+
   \int_0^t \int_{-r}^0 \|S(r+\sigma )x\|\,d|\eta |(\sigma )\,dr\\
&\leq \int_{-t}^0\int_{\sigma}^0 \|f(s)\|\,ds\,d|\eta |(\sigma )+
   \int_{-1}^{-t}\int_{\sigma}^{t+\sigma}\|f(s)\|\,ds\,d|\eta |(\sigma )\\
&\quad   +\int_0^t M\|x\|  |\eta |([-1,0])\,dr\\
&\leq \int_{-t}^0 (-\sigma )^{1/p'}\|f\|_p \,d|\eta |(\sigma )+
   \int_{-1}^{-t} t^{1/p'}\|f\|_p \,d|\eta |(\sigma )\\
&\quad +tM\|x\|  |\eta |([-1,0])\\
&\leq \int_{-1}^0 t^{1/p'}\|f\|_p \,d|\eta |(\sigma )+
   tM\|x\|  |\eta |([-1,0])\\
&=(t^{1/p'}\|f\|_p +tM\|x\| )\, |\eta |([-1,0]),
\end{split}
\end{equation*}
where $\frac{1}{p}+\frac{1}{p'}=1$, \(\ds{M:=\sup_{r\in [0,1]}\|S(r)\|}\)
and $|\eta |$ is the
positive Borel measure on $[-1,0]$ defined by the total variation of $\eta$.
Finally we conclude that
\begin{equation}\label{cond.miyadera}
\int_0^t \|\Phi (S_r x + T_0 (r)f) \|\,dr
\leq t^{1/p'}M\, |\eta |([-1,0])(\|f\|_p +\|x\| )
\end{equation}
for all $0<t<1$. Choose now $t_0$ small enough such that
$t_0^{1/p'}M\, |\eta |([-1,0])<1$. Then condition (M) is
satisfied with $q:=t_0^{1/p'}M\,|\eta |([-1,0])$.
\item An important special case of (b) are the operators $\Phi$ defined by
\(\ds{\Phi (f):=\sum_{k=0}^{n}B_k f(h_k )}\), $f\in\Wp$, where $B_k \in\cL
(X)$ and $h_k \in [-1,0]$ for $k=0,\ldots ,n$.
\end{enumerate}
\end{exs}
\begin{rem}\label{necessary}
We note that from the perturbation theorem of Miyadera-Voigt (see Theorem \ref{theo.miyadera-voigt}) 
it follows that the generator property of $(A,(D(A))$ is necessary and 
sufficient for the well-posedness of (DE) if $\Phi$ is defined as in 
(\ref{phi}) because we can choose $q$ small enough. 
Similar results for more special cases were proved by Kunisch and Schappacher (see \cite[Proposition 4.2]{ku-sc}) and for a slightly different equation by Pr\"u\ss \  (see \cite[Corollary I.1.4]{pruss}).
\end{rem}
\end{section}
\begin{section}{Stability}
In this section, we prove a stability result for the delay equation
(DE) extending a recent result  of Fischer and van
 Neerven \cite{FvN}. First, we recall some notations and definitions.
 
Let $\cT:=\left(T(t)\right)_{t\geq 0}$ be a $C_0$-semigroup of bounded
linear operators on the Banach space $X$ with generator $(G,D(G))$.
%We define the following quantities.
%
%
The {\it spectral bound} of $G$ is given by
$$s(G):=\left\{\Re\lambda\,:\,\lambda\in\sigma(G)\right\},$$
the {\it abscissa of uniform boundedness} of the resolvent of $G$ is defined by
$$s_0 (G):=\inf\left\{\omega\in\RR\,:\, \{\Re\lambda>\omega\}
\subset\varrho(G)\ \mbox{and}\ \sup_{\Re\lambda>\omega}
\|R(\lambda,G)\|<\infty\right\},$$
and {\it the uniform growth bound} or {\it type} of the semigroup 
$$\omega_0 (G):=\inf\left\{\omega\in \RR\,:\,\exists M>0\
\mbox{such that}\ \|T(t)\|\leq Me^{\omega t}\ \forall t\geq 0\right\}.$$
We say that $\cT$ is {\it uniformly exponentially stable} if
$\omega_0 (G)<0$. It is known (see \cite[Sections 1.2, 4.1]{vN}) that
$$-\infty\leq s(G)\leq s_0 (G)\leq \omega_0 (G)<\infty.$$
The theorem of Gearhart (see \cite[Theorem V.1.11]{en-na}) says that in
 a Hilbert space $X$
\begin{equation}\label{gearhart}
s_0(G)=\omega_0(G).
\end{equation}
In order to find estimates for the above quantities, we
first calculate the resolvent $R(\lambda ,\cA )$ and the
resolvent set $\rho (\cA )$ of the operator $\cA$. Given
$\lambda\in\CC$ and
$\bigl(\begin{smallmatrix}
              y\\
              g
\end{smallmatrix}\bigr)\in \cE$ we are looking for $\xf\in D(\cA )$
 such that
\begin{equation*}
(\lambda -\cA )\xf
=\begin{pmatrix}
 (\lambda -A)x -\Phi f\\
  \lambda f-f'
\end{pmatrix}=\begin{pmatrix}
                      y\\
                      g
                \end{pmatrix}.
\end{equation*}
Since $f(0)=x$, the second component of this identity is equivalent to
\begin{equation}\label{risolvente1}
 f=\epsilon_{\lambda }\otimes x +R(\lambda ,A_0 )g,
\end{equation}
where $\epsilon_{\lambda }(s):=e^{\lambda s}$ for $s\in [-1,0]$
and $(A_0 ,D(A_0 ))$ is the infinitesimal generator of the
nilpotent left shift semigroup $(T_0 (t))_{t\geq 0}$ on
$L^p ([-1,0],X)$ whose spectrum is empty.
Hence, $x$ has to satisfy the equation
\begin{equation}\label{risolvente2}
(\lambda -A -\Phi (\epsilon_{\lambda }\otimes Id))x=\Phi
R(\lambda ,A_0 )g+y.
\end{equation}
This leads to the following lemma (see also \cite{en-habil} and
\cite{nak2}).
\begin{lemma}\label{spectrum}
For $\lambda\in\CC$ we have $\lambda\in\rho (\cA )$ if and only if
$\lambda\in\rho
(A+\Phi (\epsilon_{\lambda }\otimes Id))$. Moreover, for
$\lambda\in\rho (\cA )$
the resolvent $R(\lambda ,\cA )$ is given by
\begin{equation}\label{risolvente}
\begin{pmatrix}
R(\lambda ,A+\Phi (\epsilon_{\lambda }\otimes Id)) &
R(\lambda ,A+\Phi (\epsilon_{\lambda }\otimes Id))\Phi R(\lambda
,A_0 )\\
\epsilon_{\lambda }\otimes
R(\lambda ,A+\Phi (\epsilon_{\lambda }\otimes Id))&
[\epsilon_{\lambda }\otimes
R(\lambda ,A+\Phi (\epsilon_{\lambda }\otimes Id))\Phi +Id]R(\lambda
,A_0 )
\end{pmatrix}.
\end{equation}
\end{lemma}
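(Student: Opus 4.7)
The plan is to follow the setup already spelled out before the lemma and simply carry the reduction to its logical end. Given $\bigl(\begin{smallmatrix} y\\ g\end{smallmatrix}\bigr)\in\cE$, I would look for $\bigl(\begin{smallmatrix} x\\ f\end{smallmatrix}\bigr)\in D(\cA)$ solving $(\lambda-\cA)\bigl(\begin{smallmatrix} x\\ f\end{smallmatrix}\bigr)=\bigl(\begin{smallmatrix} y\\ g\end{smallmatrix}\bigr)$, treat the second (scalar) component as an ODE in $L^p$ to eliminate $f$, and then read off a single equation for $x$ in $X$.

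First I would handle the second component $\lambda f - f' = g$ together with the boundary condition $f(0)=x$ coming from membership in $D(\cA)$. Since the nilpotent left shift has empty spectrum, every $\lambda\in\CC$ lies in $\rho(A_0)$, and $R(\lambda,A_0)g\in D(A_0)$ satisfies the ODE with value $0$ at $\sigma=0$. On the other hand $\epsilon_\lambda\otimes x\in W^{1,p}([-1,0],X)$ is a smooth solution of the homogeneous equation $\lambda h - h'=0$ with $(\epsilon_\lambda\otimes x)(0)=x$. Adding these I obtain that the unique element $f\in W^{1,p}([-1,0],X)$ solving the second component with $f(0)=x$ is exactly $f=\epsilon_\lambda\otimes x + R(\lambda,A_0)g$.

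Substituting into the first component yields $(\lambda-A)x - \Phi(\epsilon_\lambda\otimes x) - \Phi R(\lambda,A_0)g = y$, i.e.
\begin{equation*}
\bigl(\lambda - A - \Phi(\epsilon_\lambda\otimes \mathrm{Id})\bigr) x \;=\; y + \Phi R(\lambda,A_0)g.
\end{equation*}
The operator $\Phi(\epsilon_\lambda\otimes\mathrm{Id})\in\cL(X)$ is bounded, so $A+\Phi(\epsilon_\lambda\otimes\mathrm{Id})$ is closed on $D(A)$. The map $\bigl(\begin{smallmatrix} y\\ g\end{smallmatrix}\bigr)\mapsto y+\Phi R(\lambda,A_0)g$ is a bounded surjection $\cE\to X$, and by varying $y$ with $g=0$ it is clear that the reduced equation is uniquely solvable for every right-hand side in $X$ if and only if $(\lambda-\cA)$ is a bijection from $D(\cA)$ onto $\cE$. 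This gives the equivalence $\lambda\in\rho(\cA)\iff\lambda\in\rho(A+\Phi(\epsilon_\lambda\otimes\mathrm{Id}))$, boundedness of the inverse coming from the closed graph theorem applied to $\cA$ (closed by Lemma \ref{Aclosed}). Finally, inserting $x=R(\lambda,A+\Phi(\epsilon_\lambda\otimes\mathrm{Id}))\bigl[y+\Phi R(\lambda,A_0)g\bigr]$ back into $f=\epsilon_\lambda\otimes x + R(\lambda,A_0)g$ and collecting the result as a $2\times 2$ matrix acting on $\bigl(\begin{smallmatrix} y\\ g\end{smallmatrix}\bigr)$ reproduces the stated formula (\ref{risolvente}).

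There is no deep obstacle here: the argument is essentially a triangular reduction, and the only slightly delicate point is checking that $\epsilon_\lambda\otimes x + R(\lambda,A_0)g$ is genuinely in $W^{1,p}([-1,0],X)$ with the correct trace at $0$, which relies on the fact that $D(A_0)$ consists precisely of $W^{1,p}$-functions vanishing at $0$.
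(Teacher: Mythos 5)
Your proposal is correct and follows essentially the same route as the paper: the triangular reduction already set up before the lemma (solve the second component to get $f=\epsilon_\lambda\otimes x+R(\lambda,A_0)g$, then reduce to the equation $(\lambda-A-\Phi(\epsilon_\lambda\otimes\mathrm{Id}))x=y+\Phi R(\lambda,A_0)g$), with the converse obtained by taking $g=0$ and the explicit matrix (\ref{risolvente}) recovered by back-substitution. The only cosmetic difference is that the paper verifies boundedness by noting the matrix consists of bounded operators, while you invoke the closed graph theorem; both are fine.
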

\begin{proof}
Let $\lambda\in\rho (A+\Phi (\epsilon_{\lambda }\otimes Id))$.
Then the matrix in (\ref{risolvente}) is a bounded
operator from $\cE$ to $D(\cA )$  defining the inverse of $(\lambda -\cA )$.
Conversely, if $\lambda\in\rho (\cA )$, then for every
$\bigl(\begin{smallmatrix}
            y\\
            g
         \end{smallmatrix}\bigr)\in\cE$ there exists a unique
$\xf\in D(\cA )$ such that (\ref{risolvente1}) and (\ref{risolvente2})
hold. In particular, for $g=0$ and for every $y\in X$, there exists a
unique $x\in D(A)$ such that
$$(\lambda -A -\Phi (\epsilon_{\lambda }\otimes Id))x=y.$$
This means that $(\lambda -A -\Phi (\epsilon_{\lambda }\otimes Id))$ is
invertible, i.e.
$\lambda\in\rho (A-\Phi (\epsilon_{\lambda }\otimes Id))$.
\end{proof}
We now assume the well-posedness conditions from the
previous section. In particular, we will assume that $(A,D(A))$ generates a
strongly continuous semigroup on $X$, that $p>1$ and that $\Phi$
satisfies the assumption (\ref{phi})  of Example \ref{esempi} (b), i.e.
$$\Phi f=\int_{-1}^0 d\eta\,f ,$$
where $\eta :[-1,0]\longrightarrow \cL (X)$ is a function of bounded
variation.
Hence, our matrix $(\cA ,D(\cA ))$, defined in (\ref{delaymatrix})
and (\ref{delaydomain}), is the generator of a strongly
continuous semigroup \SG on the Banach space $\cE$ and (DE)
is well-posed.
For this generator  we can now estimate $s_0(\cA)$, generalizing \cite[Theorem 3.3]{FvN} with a similar proof.
\begin{theo}\label{cond.stabilita}
Assume that $s_0 (A)<0$ and let $\alpha\in(s_0 (A),0]$. If
\begin{equation}
\sup_{\omega\in\RR}\left\|\Phi (\epsilon_{\alpha +i\omega}\otimes Id)
\right\|<\frac1{\sup_{\omega\in\RR}\left\|R(\alpha +i\omega ,A)\right\|},
\end{equation}
then $s_0 (\cA )<\alpha\leq 0$.
\end{theo}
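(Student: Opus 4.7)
My plan for the proof is to combine Lemma \ref{spectrum} with a Neumann-series argument and a maximum-modulus estimate on a half-plane. Since $\alpha > s_0(A)$ we have $\alpha + i\omega \in \rho(A)$ for every $\omega \in \RR$, so I would first observe the factorisation
\[
\lambda - A - \Phi(\epsilon_\lambda \otimes Id)
= (\lambda - A)\bigl[I - R(\lambda,A)\Phi(\epsilon_\lambda \otimes Id)\bigr],
\]
valid on $D(A)$ whenever $\lambda \in \rho(A)$. The hypothesis is precisely
\[
\sup_{\omega\in\RR}\bigl\|R(\alpha+i\omega,A)\bigr\|\cdot \sup_{\omega\in\RR}\bigl\|\Phi(\epsilon_{\alpha+i\omega}\otimes Id)\bigr\| < 1,
\]
so the Neumann series inverts the bracket uniformly in $\omega$ on the line $\Re\lambda = \alpha$, and Lemma \ref{spectrum} puts this whole line into $\rho(\cA)$ with a uniform bound on $R(\alpha + i\omega, \cA)$.

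Next, I would extend this to a genuine half-plane $\{\Re\lambda > \beta\}$ with $\beta < \alpha$. Setting $F(\lambda) := R(\lambda,A)\Phi(\epsilon_\lambda \otimes Id)$, this $\cL(X)$-valued map is holomorphic on $\rho(A)$; it is uniformly bounded on every closed vertical strip inside $\{\Re\lambda > s_0(A)\}$ (from the definition of $s_0(A)$ together with the elementary estimate $\|\Phi(\epsilon_\lambda \otimes Id)\|\leq |\eta|([-1,0])\cdot \max(1, e^{-\Re\lambda})$), and $\|F(\lambda)\|\to 0$ as $\Re\lambda \to +\infty$ since $\|R(\lambda,A)\|\to 0$ there. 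Applying the scalar Phragm\'en--Lindel\"of principle to $\langle F(\lambda)x, x^*\rangle$ on $\{\Re\lambda \geq \alpha\}$ therefore yields $\sup_{\Re\lambda \geq \alpha}\|F(\lambda)\| = \sup_{\omega \in \RR}\|F(\alpha+i\omega)\| =: q_0 < 1$. Cauchy's integral formula on a small disc then gives uniform Lipschitz continuity of $F$ in the real direction on closed substrips, so $\beta \mapsto \sup_{\omega}\|F(\beta+i\omega)\|$ is left-continuous at $\alpha$; I would pick $\beta \in (s_0(A),\alpha)$ with $\sup_\omega \|F(\beta+i\omega)\| =: q < 1$ and repeat the maximum-modulus step on the half-plane $\{\Re\lambda \geq \beta\}$ to obtain $\|F(\lambda)\| \leq q$ there.

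Once this is in place the Neumann series makes $[I - F(\lambda)]^{-1}$ uniformly bounded on $\{\Re\lambda > \beta\}$, so by Lemma \ref{spectrum} every such $\lambda$ lies in $\rho(\cA)$, and the resolvent formula (\ref{risolvente}) expresses $R(\lambda,\cA)$ as a matrix whose remaining ingredients $R(\lambda,A_0)$, $\epsilon_\lambda \otimes(\cdot)$ and $\Phi$ are each uniformly bounded on that half-plane, since $\sigma(A_0) = \emptyset$ and the explicit convolution form of $R(\lambda,A_0)$ on the nilpotent shift produces only an $e^{-\Re\lambda}$-type constant over $[-1,0]$. Consequently $\sup_{\Re\lambda > \beta}\|R(\lambda,\cA)\| < \infty$, proving $s_0(\cA) \leq \beta < \alpha$.

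The step I expect to be most delicate is the passage from the pointwise strict inequality on the single line $\Re\lambda = \alpha$ to a uniform one on a slightly larger half-plane. That is genuinely an equicontinuity question for the families $\omega \mapsto \|R(\beta+i\omega,A)\|$ and $\omega \mapsto \|\Phi(\epsilon_{\beta+i\omega}\otimes Id)\|$ as $\beta$ varies, and it must be extracted from holomorphy via Cauchy estimates on a strip, together with the bounded variation of $\eta$, rather than from any compactness in the imaginary parameter $\omega$ itself.
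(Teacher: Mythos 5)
Your proposal is correct and follows essentially the same route as the paper's proof: the maximum principle for bounded analytic functions on vertical lines (the paper invokes it via Conway) transfers the smallness hypothesis from the line $\Re\lambda=\alpha$ to the half-plane, your Neumann-series inversion of $I-R(\lambda,A)\Phi(\epsilon_\lambda\otimes Id)$ is exactly the content of Lemma \ref{lemmadelta}, and the uniform bound on $R(\lambda,\cA)$ then comes from Lemma \ref{spectrum} together with the explicit estimates on $\Phi R(\lambda,A_0)$, $R(\lambda,A_0)$ and $\epsilon_\lambda\otimes(\cdot)$, just as in the paper. Your extra Cauchy-estimate step pushing the bound from $\alpha$ to some $\beta<\alpha$ is a harmless refinement that makes the strict inequality $s_0(\cA)<\alpha$ explicit, a point the paper leaves implicit.
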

To prove this statement we recall from \cite[Proposition 1.1]{FvN} the
following lemma in a modified form (compare also \cite[Theorem IV.1.16]{kato}).
\begin{lemma}\label{lemmadelta}
Let  $A$ be a closed linear
operator on a Banach space $X$, and
suppose $\lambda\in\rho (A)$. If $\Delta\in\cL (X)$ satisfies
$$\|\Delta\|\leq(1-\delta )\frac{1}{\left\|R(\lambda ,A)\right\|}$$
for some $\delta\in (0,1)$, then $\lambda\in\rho (A+\Delta )$ and
$$\left\|R(\lambda ,A+\Delta )\right\|\leq \frac{1}{\delta }\left\|R(\lambda ,A)\right\|.$$
\end{lemma}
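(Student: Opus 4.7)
The plan is to use the classical Neumann series perturbation argument. Since $\Delta\in\cL(X)$ is bounded, the operator $A+\Delta$ has domain $D(A+\Delta)=D(A)$, and on this domain I would factor
$$\lambda-(A+\Delta)=(\lambda-A)-\Delta=(\lambda-A)\bigl(I-R(\lambda,A)\Delta\bigr),$$
exploiting that $\lambda\in\rho(A)$, so that $R(\lambda,A):X\to D(A)$ is a bounded bijection satisfying $(\lambda-A)R(\lambda,A)=I_X$ and $R(\lambda,A)(\lambda-A)=I_{D(A)}$.

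Next I would invert the bounded operator $I-R(\lambda,A)\Delta\in\cL(X)$ via its Neumann series. The hypothesis gives
$$\|R(\lambda,A)\Delta\|\leq \|R(\lambda,A)\|\,\|\Delta\|\leq 1-\delta<1,$$
so $(I-R(\lambda,A)\Delta)^{-1}=\sum_{n=0}^{\infty}\bigl(R(\lambda,A)\Delta\bigr)^n$ exists in $\cL(X)$ with
$\bigl\|(I-R(\lambda,A)\Delta)^{-1}\bigr\|\leq \frac{1}{1-(1-\delta)}=\frac{1}{\delta}$.

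Combining the factorization with this inverse shows that $\lambda-(A+\Delta)$ is a bijection from $D(A)$ onto $X$ with bounded two-sided inverse
$$R(\lambda,A+\Delta)=\bigl(I-R(\lambda,A)\Delta\bigr)^{-1}R(\lambda,A),$$
so $\lambda\in\rho(A+\Delta)$, and submultiplicativity of the operator norm yields $\|R(\lambda,A+\Delta)\|\leq \tfrac{1}{\delta}\|R(\lambda,A)\|$, as claimed. No step here is a real obstacle; this is essentially a textbook perturbation-by-bounded-operators result, and the only thing to watch is that the factorization is performed on the correct common domain $D(A)=D(A+\Delta)$ and that the precise constants $(1-\delta)$ in the hypothesis and $1/\delta$ in the conclusion line up through the geometric series.
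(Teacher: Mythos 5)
Your proof is correct, and it is essentially the standard argument: the paper itself does not prove Lemma \ref{lemmadelta} but merely recalls it from \cite[Proposition 1.1]{FvN} and \cite[Theorem IV.1.16]{kato}, both of which rest on exactly the Neumann series factorization $\lambda-(A+\Delta)=(\lambda-A)(I-R(\lambda,A)\Delta)$ that you use, with the same bookkeeping of the constants $1-\delta$ and $1/\delta$. The one point worth being careful about, which you handle implicitly, is that the inverse is best written as $R(\lambda,A)\bigl(I-\Delta R(\lambda,A)\bigr)^{-1}$ (equal to your expression by the standard commutation identity) so that it visibly maps $X$ into $D(A)$.
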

\begin{proof}[Proof of Theorem \ref{cond.stabilita}]
Choose $\delta\in (0,1)$ such that
$$\sup_{\omega\in\RR}\left\|\Phi (\epsilon_{\alpha +i\omega }\otimes Id)
\right\|
\leq (1-\delta ) \frac{1}{\sup_{\omega\in\RR}
\left\|R(\alpha +i\omega ,A)\right\|}.$$
We observe that $\Phi (\epsilon_{\lambda}\otimes Id)$ is an analytic
function and that the  suprema of bounded analytic functions along
vertical lines, $\Re\lambda =c$,  decrease as $c$ increases
(see \cite[Chapter 6.4]{conway} for the details).
For all $\lambda\in\CC$ with $\Re\lambda >\alpha$ we obtain
$$\left\|\Phi(\epsilon_{\lambda }\otimes Id)\right\|\leq
\sup_{\omega\in\RR}\left\|\Phi (\epsilon_{\alpha +i\omega}\otimes Id)
\right\|\leq \frac{(1-\delta)}{\sup_{\omega\in\RR}
\left\|R(\alpha +i\omega ,A)\right\|}
\leq \frac{(1-\delta)}{\left\|R(\lambda,A)\right\|}.$$
Therefore, by Lemma \ref{lemmadelta},
$\left\{\Re\lambda >\alpha\right\}\subset\rho
\left(A+\Phi (\epsilon_{\lambda}\otimes Id)\right)$,
and for all $\lambda\in\CC$ with $\Re\lambda >\alpha$ we have
\begin{equation*}
\left\|R\left(\lambda ,A+\Phi (\epsilon_{\lambda}\otimes Id)\right)
\right\| \leq 
\frac{1}{\delta} \|R(\lambda ,A)\|.
\end{equation*}
Hence, by Lemma \ref{spectrum} ,
$$\{\Re \lambda >\alpha\}\subset\rho (\cA ).$$
We show that
$R(\lambda ,\cA )$ is bounded on this halfplane. We have shown above
that $R(\lambda ,A+\Phi (\epsilon_{\lambda}\otimes Id))$ is bounded.
The operator $A_0$ generates the nilpotent left shift, so
$R(\lambda ,A_0 )$ is bounded on this right halfplane.
The function $\Phi (\epsilon_{\lambda}\otimes Id)$ is  bounded, analytic
and $\Phi R(\lambda , A_0 )$ is continuous and bounded, since
\begin{equation*}
\begin{split}
\|\Phi R(\lambda ,A_0 )f\| &=
\left\|\int_{-1}^0 d\eta (\sigma )\int_{\sigma }^0
   e^{\lambda (\sigma -\tau )}f(\tau )\,d\tau\right\|\\
&\leq \int_{-1}^0 \int_{\sigma }^0
   \|e^{\lambda (\sigma -\tau )}f(\tau )\|\,d\tau\,d|\eta |(\sigma )\\
&=\int_{-1}^0 \int_{\sigma }^0
   e^{\Re\lambda (\sigma -\tau )}\|f(\tau )\|\,d\tau\,d|\eta |(\sigma )\\
&\leq\int_{-1}^0 \int_{\sigma }^0
   e^{-\alpha}\|f(\tau )\|\,d\tau\,d|\eta |(\sigma )\\
&\leq\int_{-1}^0 \int_{-1}^0
   e^{-\alpha }\|f(\tau )\|\,d\tau\,d|\eta |(\sigma )\\
&\leq e^{-\alpha }\,|\eta | ([-1,0])\,\|f\|_p
\end{split}
\end{equation*}
for every $\lambda$ with $\Re\lambda >\alpha$ and every $f\in\Lp$.
Therefore $\sup_{\Re\lambda >\alpha}\|R(\lambda ,\cA )\|<\infty $,
and we  conclude that $s_0 (\cA )<\alpha\leq 0$.\end{proof}
Using Gearhart's theorem quoted above, this result can be improved for
Hilbert spaces.
\begin{cor}\label{stab.hilbert}
Take $p=2$ and $X$  a Hilbert space. If $\omega_0 (A)<\alpha\leq 0$
and
$$\sup_{\omega\in\RR}\left\|\Phi (\epsilon_{\alpha +i\omega}\otimes Id)
\right\|<\frac{1}{\sup_{\omega\in\RR}\left\|R(\alpha +i\omega ,A)\right\|},
$$
then $\omega_0 (\cA )<\alpha\leq 0$.
\end{cor}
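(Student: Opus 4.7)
The plan is to observe that the corollary is essentially a direct consequence of Theorem \ref{cond.stabilita} once Gearhart's theorem (\ref{gearhart}) is applied on both ends (to $A$ on the input side, to $\cA$ on the output side). This is the key point behind the improvement: in the Hilbert space setting there is no gap between $s_0$ and $\omega_0$, so the stability statement about $s_0(\cA)$ furnished by Theorem \ref{cond.stabilita} can be upgraded to a statement about $\omega_0(\cA)$, and the hypothesis on $A$ weakened accordingly from $s_0(A)$ to $\omega_0(A)$.

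First I would note that since $X$ is Hilbert and $p=2$, the space $L^2([-1,0],X)$ is Hilbert, hence $\cE = X \times L^2([-1,0],X)$ is Hilbert. Therefore Gearhart's theorem is available for both generators $(A,D(A))$ on $X$ and $(\cA,D(\cA))$ on $\cE$.

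Next I would verify the hypotheses of Theorem \ref{cond.stabilita}. Applying (\ref{gearhart}) to $(A,D(A))$ gives $s_0(A) = \omega_0(A) < \alpha \leq 0$, so in particular $s_0(A) < 0$ and $\alpha \in (s_0(A),0]$, which are exactly the hypotheses of the theorem. The bound on $\sup_{\omega}\|\Phi(\epsilon_{\alpha+i\omega}\otimes Id)\|$ is imposed directly. Thus Theorem \ref{cond.stabilita} yields
\[
s_0(\cA) < \alpha \leq 0.
\]

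Finally I would apply Gearhart's theorem (\ref{gearhart}) to $(\cA,D(\cA))$ on the Hilbert space $\cE$, which gives $\omega_0(\cA) = s_0(\cA) < \alpha \leq 0$, as required. There is no real obstacle here — the only thing to check is that $\cE$ is indeed a Hilbert space under the natural inner product when $X$ is Hilbert and $p=2$, which is immediate. The substance of the result lies entirely in Theorem \ref{cond.stabilita} and Gearhart's theorem; the corollary is the clean Hilbert space packaging of both.
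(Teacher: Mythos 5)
Your argument is correct and matches the paper's intended proof: the corollary follows from Theorem \ref{cond.stabilita} by applying Gearhart's identity (\ref{gearhart}) on the Hilbert space $\cE=X\times L^2([-1,0],X)$ to upgrade $s_0(\cA)<\alpha$ to $\omega_0(\cA)<\alpha$. (Minor remark: on the input side Gearhart is not even needed, since $s_0(A)\leq\omega_0(A)<\alpha$ holds in any Banach space; the only essential use is on the output side.)
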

\begin{exa}\label{parab}
We consider the reaction diffusion equation with delay (see
\cite[Section 2.1]{wu})
\begin{equation*}
\begin{cases}
        &\partial_tw(x,t)=\Delta w(x,t)+c\int_{-1}^0 w(x,t+\tau)dg(\tau),
	\quad x\in\Omega,\ t\geq0,\\
        & w(x,t)=0, \quad x\in \partial\Omega,\ t\geq0, \\
        & w(x,t)=f(x,t), \quad (x,t)\in\Omega\times[-1,0],
\end{cases}
\end{equation*}
where $c$ is a constant, $\Omega\subset\RR^n$ a bounded domain, $f(\cdot,t)\in L^2(\Omega)$
for all $t\geq0$, $f(\cdot,0)\in W^{1,2}_0(\Omega)\cap W^{2,2}(\Omega)$ and the map
$[-1,0]\ni t\mapsto f(\cdot,t)\in L^2(\Omega)$ belongs to $W^{1,2}\left([-1,0],L^2(\Omega)\right)$.
The function $g:[-1,0]\to[0,1]$ is the Cantor function (see \cite[Example I.8.15]{ge-ol}),
which is singular and has total variation $1$. 
We consider $X:=L^2(\Omega)$, $A:=\Delta_D$ the Dirichlet-Laplacian with usual domain and
$\eta:=c\cdot g\cdot Id$. The well-posedness follows from the calculations in Examples \ref{esempi}(b).
We have to verify the stability
estimate. First, the expression with $\Phi$ satisfies
$$
\left\|\Phi (\epsilon_{\lambda}\otimes Id)y\right\|=|c|\|y\|\left\|\int_{-1}^0 e^{\lambda\tau}dg(\tau)\right\| \leq |c|\|y\|.
$$
The other expression, using that $A$ is a normal operator on a Hilbert 
space (see \cite[Section V.3.8]{kato}, can be computed as
$$
\sup_{\omega\in\RR}\left\|R(i\omega ,A)\right\| = \sup_{\omega\in\RR}\frac1{d\left(i\omega,\sigma(A)\right)} = \frac1{d\left(0,\sigma(A)\right)} =\frac1{|\lambda_1|},
$$
where $\lambda_1$ is the first eigenvalue of the Laplacian.
Thus the solutions decay
exponentially if
$$|c|<{|\lambda_1|}.$$
We refer for example to \cite[Chapter 6]{davies} for estimates on $\lambda_1$ and for further references. The same result holds for  more general elliptic operators as considered in \cite[Section 6.3]{davies}.
\end{exa}
\end{section}
\begin{section}{Norm continuity}
In this section, we show that if the operator $(A,D(A))$ in (DE) generates an
immediately norm continuous semigroup on $X$, then, under an
appropriate assumption, the operator matrix associated to the delay equation 
generates an eventually norm continuous semigroup on $\cE$. This fact is
important for the study of the asymptotic behaviour of the solutions.
For convenience, we repeat here the definitions from \cite[Definition II.4.17]{en-na}. A strongly continuous semigroup $\left(T(t)\right)_{t\geq 0}$ on a Banach space $Y$ is called {\it eventually norm continuous}, if there exists $t_0\geq 0$ such that the function $t\mapsto T(t)$ is norm continuous from $(t_0,\infty)$ to $\cL(Y)$. The semigroup is called {\it immediately norm continuous} if $t_0$ can be chosen to be $t_0=0$.

Let $(A,D(A))$ be the generator of a strongly continuous semigroup
\sg on a Banach space $X$ and let $C\in\cL ((D(A),\|\cdot\|_A ) ,X)$. 
Moreover, let
us assume that there exist  $\varepsilon >0$ and a function
$q:(0,\varepsilon ) \rightarrow\RR_+$ such that
\(\ds{\lim_{t\searrow 0}q(t)=0}\) and
\begin{equation}\label{miyadera}
\int_0^t \|C\,T(s)x\|\,ds \leq q(t)\,\|x\| 
\end{equation}
for every $x\in D(A)$ and every $0<t<\varepsilon$. Then we know
from the perturbation theorem of Miyadera-Voigt that $(A+C,D(A))$ generates
a strongly continuous semigroup \sgu on $X$ given
by the Dyson-Phillips series
\begin{equation}\label{dy-ph}
U(t) = \sum_{n=0}^\infty (V^n T)(t),\qquad 
t\geq 0,
\end{equation}
where $V$ is the abstract Volterra operator defined in \cite[Theorem 
III.3.14]{en-na} converging uniformly on compact intervals of $\RR_+$
(see \cite[Corollary III.3.15]{en-na}).  
For $x\in D(A)$ we have 
\begin{equation}\nonumber
(V T)(t)x:=\int_0^t T(t-s)CT(s)x\,ds.
\end{equation}
\begin{theo}\label{cond.ev.norm.cont}
If \sg is norm continuous for $t>\alpha$ and there exists
$n\in\NN$ such that $V^n T$ is norm continuous for $t>0$, then the
perturbed semigroup \sgu is norm continuous for $t>n\alpha$.
\end{theo}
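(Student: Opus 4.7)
The idea is to exploit the Dyson-Phillips series
$$U(t)=\sum_{k=0}^\infty(V^kT)(t),$$
which by \cite[Corollary III.3.15]{en-na} converges uniformly on compact subsets of $\RR_+$, and to split it at the index $n$:
$$U(t)=\sum_{k=0}^{n-1}(V^kT)(t)+\sum_{k=n}^\infty(V^kT)(t).$$
The goal is to show that each of the two pieces is norm continuous on $(n\alpha,\infty)$.

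For the finite head I would argue by induction on $k\geq 0$ that $V^kT$ is norm continuous for $t>(k+1)\alpha$. The base case $k=0$ is the hypothesis on $T$. For the inductive step, assume $V^kT$ is norm continuous for $t>(k+1)\alpha$ and take $t>(k+2)\alpha$. I would write the difference $V^{k+1}T(t+h)-V^{k+1}T(t)$ via the integral formula
$$V^{k+1}T(t)=\int_0^t T(t-s)\,C(V^kT)(s)\,ds=\int_0^t V^kT(t-s)\,CT(s)\,ds$$
(the two representations are easily seen to be equal by Fubini on the expanded iterated integral) and split the resulting convolution into pieces according to whether $T(t-s)$ or $V^kT(t-s)$ provides the norm continuity. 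On the subinterval where $T(t-s)$ lies in $(\alpha,\infty)$, uniform norm continuity of $T$ on compact subsets of $(\alpha,\infty)$ combines with the iterated Miyadera-type bound
$$\int_0^{t_0}\|C(V^kT)(s)x\|\,ds\leq q^{k+1}\|x\|,$$
derived from (\ref{m-v}) by Fubini; on the complementary subinterval, where $t-s\leq\alpha$ forces us to use the dual representation, one exploits instead that $t-s>(k+1)\alpha$ on that piece together with a symmetric estimate $\int_0^{t_0}\|CT(s)x\|\,ds\leq q\|x\|$. Taking $k=0,\ldots,n-1$ then yields norm continuity of each term in the head on $(n\alpha,\infty)$, since $(k+1)\alpha\leq n\alpha$.

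For the tail, starting from the hypothesis that $V^nT$ is norm continuous on $(0,\infty)$, the same convolution strategy applies without any further shift: at every point $t>0$ one can split the convolution so that each subinterval lies in the ``good'' region of one of the two factors. This yields by induction that $V^{n+j}T$ is norm continuous on $(0,\infty)$ for every $j\geq 0$, and uniform convergence of the Dyson-Phillips series passes norm continuity to the tail sum, which is thus norm continuous on $(0,\infty)\supset(n\alpha,\infty)$. Combining head and tail gives the claim.

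The main obstacle is precisely the inductive step above. The delicate issue is that $C$ is only $A$-bounded, so neither $s\mapsto CT(s)$ nor $s\mapsto C(V^kT)(s)$ is uniformly operator bounded, which prevents simply pulling the operator norm under the integral sign. What saves the argument is the interplay between the two convolution representations of $V^{k+1}T$ together with the hierarchy of Miyadera-type $L^1$-estimates: on each subinterval, the ``$\|x\|$-factor'' has to be extracted from a norm-continuous operator-valued factor, while the ``$L^1$-factor'' is supplied by an integral of the form $\int\|C(V^jT)(\cdot)x\|$ controlled by iterating (\ref{m-v}).
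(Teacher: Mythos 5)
Your overall architecture is the right one, and it is essentially the architecture of the proof the paper refers to (the paper itself gives no argument, only the citations \cite{na-pi} and \cite{diss}): split the Dyson--Phillips series at index $n$, show by induction that the head terms $V^kT$ are norm continuous for $t>(k+1)\alpha$ (hence for $t>n\alpha$ when $k\le n-1$), show that $V$ preserves norm continuity on $(0,\infty)$ so that all tail terms are immediately norm continuous, and pass to the tail sum by the local uniform convergence quoted from \cite[Corollary III.3.15]{en-na}. The numerology is correct. However, the inductive step for the head terms --- which you yourself identify as the main obstacle --- is not actually carried out, and the manipulation you describe is not legitimate as stated. The two representations $\int_0^t T(t-s)C(V^kT)(s)\,ds$ and $\int_0^t (V^kT)(t-s)CT(s)\,ds$ agree only after integrating over all of $[0,t]$; you cannot take $\int_0^c$ of one integrand and $\int_c^t$ of the other and still have $(V^{k+1}T)(t)$. (Your sentence ``on the complementary subinterval, where $t-s\le\alpha$ \dots one exploits that $t-s>(k+1)\alpha$ on that piece'' is a symptom of this clash of parametrizations.) What is legitimate is to fix one representation, split it, and substitute inside one piece; but then on the piece near $s=t$ the increment in $t$ unavoidably lands on the factor adjacent to $C$, and since $C$ is only $A$-bounded, operator-norm continuity of that factor does not pass through $C$. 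Naming this difficulty and appealing to ``the interplay of the two representations'' does not resolve it, so the proof is incomplete exactly at its crux.

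The missing device is concrete. Work with $(V^{k+1}T)(t)x=\int_0^t (V^kT)(t-s)\,CT(s)x\,ds$, $x\in D(A)$, and for $t>(k+2)\alpha$ split at $s=t-c$ with $c\in\bigl((k+1)\alpha,\,t-\alpha\bigr)$. On $[0,t-c]$ the increment sits on $(V^kT)$ evaluated at arguments $\ge c>(k+1)\alpha$, so the induction hypothesis applies, multiplied by $\int_0^t\|CT(s)x\|\,ds\le K_t\|x\|$; the boundary term is dominated by $\sup\|(V^kT)\|\int_0^{|h|}\|CT(r)T(t-c)x\|\,dr\le \mathrm{const}\cdot q(|h|)\|x\|$. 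On the remaining piece substitute $u=t-s$ to get $\int_0^{c}(V^kT)(u)\,CT(t-u)x\,du$; the increment is now $C[T(t+h-u)-T(t-u)]x$, and one writes it as $CT(t-u-\delta)\,[T(\delta+h)-T(\delta)]x$ with a \emph{fixed} $\delta\in(\alpha,\,t-c]$ (possible precisely because $t-c>\alpha$), so that $\int_0^{c}\|C[T(t+h-u)-T(t-u)]x\|\,du\le K_t\,\|T(\delta+h)-T(\delta)\|\,\|x\|\to 0$ uniformly, since $\delta>\alpha$. Note that this only needs the $L^1$-bound for $CT(\cdot)y$, so your Fubini identity and the iterated bound $q^{k+1}$ are dispensable; note also that both the boundary terms here and the short-interval estimates in your tail argument require the strengthened Miyadera condition (\ref{miyadera}) with $\lim_{t\searrow 0}q(t)=0$ (the standing assumption of this section), not merely (\ref{m-v}) with a fixed $q<1$, which gives no uniform smallness of $\int_{t-\delta}^{t}\|CT(s)x\|\,ds$. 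With these repairs your plan does yield the theorem along the lines of \cite{na-pi} and \cite{diss}.
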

The proof is a straightforward generalization of 
\cite[Theorem 6.1]{na-pi}. We refer to \cite[Corollary 2.7]{diss} for the details.
We now apply this result to the operator
$(\cA ,D(\cA ))$ associated to the delay equation (DE).
As before, we will assume that $(A,D(A))$ generates a
strongly continuous semigroup \sgs on $X$ and that the perturbation $\cB$ satisfies the following condition being slightly stronger than condition (M). There exists
$q:\RR_+\longrightarrow \RR_+$ with $\lim_{t\rightarrow0^+}q(t)=0$ and
\begin{equation}\tag*{(K)}
\qquad\int_0^t \|\Phi (S_s x+T_0 (s)f)\|\,ds
\leq q(t)\left\|\xf\right\|
\end{equation}
for all $\xf\in D(\cA_0 )$ and $t>0$.
We then have that $(\cA ,D(\cA ))$ is a generator by Theorem
\ref{theo.miyadera-voigt}. Moreover, from (\ref{cond.miyadera}) it follows that all the cases in Examples \ref{esempi}  satisfy this condition.\\
We recall that $(\cA_0 ,D(\cA_0 ))$ is the operator defined in
(\ref{delaymatrix0}) and (\ref{delaydomain0}) generating the strongly
continuous semigroup \SGO given by (\ref{unperturbeddelaysemigroup}).
\begin{prop}\label{normcont.delay.sg0}
If \sgs is immediately norm continuous, then \SGO is norm continuous for $t\geq
1$.
\end{prop}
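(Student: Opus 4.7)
The plan is to exploit the block-triangular form
\[
\cT_0(t) = \begin{pmatrix} S(t) & 0 \\ S_t & T_0(t) \end{pmatrix}
\]
and to verify norm continuity of each entry separately on $[1,\infty)$. The crucial observation is that $(T_0(t))_{t\geq 0}$ is the \emph{nilpotent} left shift on $L^p([-1,0],X)$, so $T_0(t) = 0$ for every $t \geq 1$, making the bottom-right block trivially constant. The top-left block $S(t)$ is norm continuous on $(0,\infty)$ by hypothesis, hence on $[1,\infty)$.

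The only remaining task is to show that the off-diagonal map $t \mapsto S_t$ is norm continuous from $[1,\infty)$ into $\cL(X, L^p([-1,0],X))$. For $t \geq 1$, the truncation in the definition of $S_t$ is inactive, so $(S_t x)(\tau) = S(t+\tau)x$ for every $\tau \in [-1, 0]$. Consequently, for $\|x\| \leq 1$ and $t, t_0 \geq 1$,
\[
\|(S_t - S_{t_0})x\|_{L^p}^p = \int_{-1}^0 \|(S(t+\tau) - S(t_0+\tau))x\|^p\, d\tau \leq \int_{-1}^0 \|S(t+\tau) - S(t_0+\tau)\|^p\, d\tau,
\]
and it suffices to show that the integral on the right tends to $0$ as $t \to t_0$.

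I would handle this by dominated convergence. On the bounded interval $[0, t_0 + 1]$ the semigroup norm $\|S(\cdot)\|$ is bounded by some constant $M$, so the integrand is bounded by $(2M)^p$ uniformly for $|t - t_0| \leq 1$. For almost every $\tau \in [-1, 0]$ --- namely every $\tau$ with $t_0 + \tau > 0$ --- the immediate norm continuity of \sgs yields $\|S(t+\tau) - S(t_0+\tau)\| \to 0$ as $t \to t_0$. Dominated convergence then gives the required convergence of the integral, whence $\|S_t - S_{t_0}\|_{\cL(X, L^p)} \to 0$.

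The only mild subtlety is the boundary case $t_0 = 1$: at $\tau = -1$ the pointwise convergence of the integrand fails, since $\|S(t-1) - I\|$ need not vanish as $t \to 1^+$ for a merely immediately norm continuous semigroup. However, this failure occurs on a single point of measure zero and is harmless for the $L^p$ integral, which is precisely why the dominated-convergence framework is preferable here to a uniform-continuity-on-compacts argument (the latter would cleanly handle only the open half-line $t > 1$).
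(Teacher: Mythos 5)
Your proof is correct and follows essentially the same route as the paper: use that $T_0(t)=0$ for $t\geq 1$ to reduce everything to norm continuity of $t\mapsto S_t$ in $\cL\bigl(X,L^p([-1,0],X)\bigr)$, estimated through $\int_{-1}^0\|S(t+\tau)-S(t_0+\tau)\|^p\,d\tau$. Your dominated-convergence step is in fact a slightly more careful justification than the paper's appeal to uniform norm continuity on compact intervals, since it explicitly disposes of the boundary case $t_0=1$, where pointwise convergence fails only at the measure-zero point $\tau=-1$.
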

\begin{proof}
Let $t\geq 1$. Then $T_0 (t)=0$ and $\cS_0 (t)=\bigl(\begin{smallmatrix}
                                                S(t) & 0\\
						S_t  & 0
						\end{smallmatrix}\bigr)$.
So it suffices to show that the map $t\mapsto S_t$ from $[1,\infty )$
to $\cL (X,\Lp )$ is norm continuous.\\
For $s, t\geq 1$ we have
\begin{equation*}
\begin{split}
\lim_{s\rightarrow t}\|S_s -S_t\| 
 &=\lim_{s\rightarrow t}\sup_{\|x\|\leq 1}
\left(\int_{-1}^0 \|S(s+\sigma )x-S(t+\sigma )x\|^p \,d\sigma\right)
^{\frac{1}{p}}\\
 &\leq \lim_{s\rightarrow t}\sup_{\|x\|\leq 1}
\bigl(\int_{-1}^0 \|S(s+\sigma )-S(t+\sigma )\|^p \,d\sigma\bigr)
^{\frac{1}{p}}\|x\|\\
&= \lim_{s\rightarrow t}
\bigl(\int_{-1}^0 \|S(s+\sigma )-S(t+\sigma )\|^p \,d\sigma\bigr)
^{\frac{1}{p}},
\end{split}
\end{equation*}
which converges to $0$ as $s$ tends to $t$ 
since \sgs is immediately norm continuous and therefore
uniformly norm continuous on compact intervals.
\end{proof}
We are now ready to apply Theorem \ref{cond.ev.norm.cont}.
\begin{prop}\label{normcont.delay.sg}
If the semigroup \sgs generated by $(A,D(A))$ is immediately norm continuous,
then \SG is norm continuous for $t\geq 1$.
\end{prop}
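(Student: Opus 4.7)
The plan is to invoke Theorem \ref{cond.ev.norm.cont} with $(G, D(G)) = (\cA_0, D(\cA_0))$, $C = \cB$, $\alpha = 1$ (available from Proposition \ref{normcont.delay.sg0}) and $n = 1$. Since $n\alpha = 1$, this will yield norm continuity of $\cT(t)$ for $t > 1$ once I verify that the first Dyson--Phillips correction
\[
V\cT_0(t)\xf \;=\; \int_0^t \cT_0(t-s)\,\cB\,\cT_0(s)\xf\,ds
\]
depends norm continuously on $t > 0$ as an element of $\cL(\cE)$.

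Setting $h(s):=\Phi(S_sx+T_0(s)f)$ and expanding the matrix product gives
\[
\cT_0(t-s)\,\cB\,\cT_0(s)\xf \;=\; \begin{pmatrix} S(t-s)\,h(s) \\ S_{t-s}\,h(s) \end{pmatrix},
\]
so $V\cT_0(t)$ decomposes as $\bigl(\begin{smallmatrix} P_1(t)\\ P_2(t) \end{smallmatrix}\bigr)$ with $P_1(t)\xf := \int_0^t S(t-s)\,h(s)\,ds \in X$ and $P_2(t)\xf := \int_0^t S_{t-s}\,h(s)\,ds \in \Lp$. The change of variables $r := t-s+\tau$ yields, for $\tau \in (-t, 0]$,
\[
\bigl(P_2(t)\xf\bigr)(\tau) \;=\; \int_0^{t+\tau} S(r)\,h(t+\tau-r)\,dr \;=\; P_1(t+\tau)\xf,
\]
while $\bigl(P_2(t)\xf\bigr)(\tau) = 0$ for $\tau \in [-1, -t]$; extending $P_1$ by zero on $(-\infty, 0)$, this identity reads $(P_2(t)\xf)(\tau) = P_1(t+\tau)\xf$ for all $\tau \in [-1, 0]$.

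Next I would show that $P_1 : [0, \infty) \to \cL(\cE, X)$ is norm continuous with $P_1(0) = 0$. Continuity at the origin is immediate: for $\xf \in D(\cA_0)$ condition (K) gives $\|P_1(s)\xf\| \leq M_s\, q(s)\, \|\xf\|$, where $M_s := \sup_{0 \leq r \leq s}\|S(r)\|$, and this extends to all of $\cE$ by density. For $0 < t' < t$, writing
\[
P_1(t)-P_1(t') = \int_{t'}^t S(t-s)\,h(s)\,ds + \int_0^{t'}\bigl[S(t-s)-S(t'-s)\bigr] h(s)\,ds,
\]
I handle the first integral by the semigroup identity $\cB\,\cT_0(s+t') = \cB\,\cT_0(s)\,\cT_0(t')$ together with (K), obtaining the bound $M q(t-t')\,\|\cT_0(t')\xf\|$. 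In the second integral I split at $\varepsilon \in (0, t')$: on $[0, t'-\varepsilon]$ the argument $t'-s$ stays in the compact interval $[\varepsilon, t']$, so immediate norm continuity of \sgs forces $\|S(t-s)-S(t'-s)\|$ to be uniformly small as $t \to t'$, and the $L^1$-bound from (K) controls the integral; on $[t'-\varepsilon, t']$ the integrand is dominated by $2M\|h(s)\|$, whose integral is in turn $\leq 2M q(\varepsilon)\,\|\cT_0(t'-\varepsilon)\xf\|$ by the same shift argument, small for $\varepsilon$ small.

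With continuity of $P_1$ in hand, the pointwise formula gives
\[
\|P_2(t)-P_2(t')\|_{\cL(\cE, \Lp)}^p \;\leq\; \int_{-1}^0 \|P_1(t+\tau)-P_1(t'+\tau)\|^p\,d\tau,
\]
which tends to $0$ as $t \to t'$ by uniform norm continuity of $P_1$ on compact subsets of $[0, \infty)$ and dominated convergence. Hence $V\cT_0$ is norm continuous on $(0, \infty)$, and Theorem \ref{cond.ev.norm.cont} delivers the claim. The main technical obstacle is that \sgs need not be norm continuous at $0$, so the naive estimate of $\|S(t-s)-S(t'-s)\|$ on the whole of $[0, t']$ breaks down near $s = t'$; the role of condition (K) is precisely to provide the missing $L^1$-smallness of $h$ on short intervals, absorbing this singular behaviour.
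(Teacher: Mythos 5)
Your proposal is correct and takes essentially the same route as the paper: apply Theorem \ref{cond.ev.norm.cont} with $n=1$ and $\alpha=1$ (supplied by Proposition \ref{normcont.delay.sg0}) and verify that $V\cT_0$ is norm continuous on $(0,\infty)$ by combining condition (K), the shift identity $\cB\,\cT_0(s+t')=\cB\,\cT_0(s)\cT_0(t')$ and the immediate norm continuity of $(S(t))_{t\geq 0}$. The only deviation is cosmetic: the paper treats the second component of $V\cT_0$ by repeating the first-component argument with the norm-continuous map $t\mapsto S_t$, whereas you deduce it from the first component via the identity $(P_2(t)\xf)(\tau)=P_1(t+\tau)\xf$ and an $L^p$ estimate --- a tidy shortcut; note also that, exactly as in the paper's own proof, Theorem \ref{cond.ev.norm.cont} literally gives continuity for $t>1$, the endpoint $t=1$ being glossed over in both arguments.
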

The same result was already proved by a different technique, and for the special case $\Phi :=\sum_{k=0}^n B_k
\delta_{h_k}$, 
by   Fischer and 
 van Neerven \cite[Proposition 3.5]{FvN}.
\begin{proof}[Proof of Proposition \ref {normcont.delay.sg}]
From Proposition \ref{normcont.delay.sg0}, we have that \SGO is norm continuous
for $t\geq 1$. We now show that $V\cT_0$ is norm continuous for $t\geq 0$. In
fact, for $t\geq 0$ and $\xf\in D(\cA_0 )$ we have
\begin{align*}
V\cT_0 (t)\xf 
 &=\int_0^t \cT_0 (t-s)\cB \cT_0 (s)\xf\,ds\\
 &=\int_0^t \cT_0 (t-s)\begin{pmatrix}
                       0&\Phi\\
		       0&0
		       \end{pmatrix}
            \begin{pmatrix}
                       S(s)x\\
		       S_s x+T_0 (s)f 
		       \end{pmatrix}\,ds\\
 &=\int_0^t \begin{pmatrix}
                       S(t-s)&0\\
		       S_{t-s}&T_0 (t-s)
		       \end{pmatrix}
          \begin{pmatrix}
                       \Phi (S_s x+T_0 (s)f)\\
		       0
		       \end{pmatrix}\,ds\\
 &=\int_0^t \begin{pmatrix}
                       S(t-s)\Phi (S_s x+T_0 (s)f)\\
		       S_{t-s}\Phi (S_s x+T_0 (s)f)
		       \end{pmatrix}\,ds.
\end{align*}
We prove norm continuity of both components separately.
1. Let $t\geq 0$ and $1>h>0$. Then we have
\begin{align*}
 & \left\|\int_0^{t+h}S(t+h-s)\Phi (S_s x+T_0 (s)f)\,ds-
 \int_0^t S(t-s)\Phi (S_s x+T_0 (s)f)\,ds\right\|\\
 &\qquad\leq \left\|\int_t^{t+h}S(t+h-s)\Phi (S_s x+T_0 (s)f)\,ds\right\|\\
 &\qquad\phantom{\leq}+\left\|\int_0^t (S(t+h-s)-S(t-s))\Phi (S_s x+T_0
(s)f)\,ds\right\|\\
 &\qquad\leq \int_0^h \|S(h-s)\|\,\|\Phi (S_{s+t} x+T_0
(s+t)f)\|\,ds\\
 &\qquad\phantom{\leq}+\int_0^t \|S(t+h-s)-S(t-s)\|\,\|\Phi (S_s x+T_0
(s)f)\|\,ds.\\
 &\qquad\leq  \sup_{0\leq r\leq 1}\|S(r)\| q(h)\left\|\cT_0 (t)\xf\right\|\\
 &\qquad\phantom{\leq}
+\int_0^t \|S(t+h-s)-S(t-s)\|\,\|\Phi (S_s x+T_0
(s)f)\|\,ds.
\end{align*}
By condition (K), the Lebesgue dominated convergence theorem and by the immediate norm continuity of \sgs, we have that
$$\sup_{0\leq r\leq 1}\|S(r)\| q(h)\left\|\cT_0 (t)\xf\right\|+
\int_0^t \|S(t+h-s)-S(t-s)\|\,\|\Phi (S_s x+T_0
(s)f)\|\,ds$$
tends to $0$ as $h\rightarrow 0^+$ uniformly in $\xf\in
D(\cA_0 )$, $\left\|\xf\right\|\leq1$. The proof for $h\rightarrow 0^-$ is analogous.\\
Since $D(A)$ is dense in $\cE$, the first component of $V\cT_0$ is immediately norm
continuous.
2. To prove immediate norm continuity of the second component of
$V\cT_0$ one proceeds in a  similar way. We only have to
use the norm continuity of the map $t\mapsto S_t$, which was proved in
Proposition \ref{normcont.delay.sg0}.
Hence, the map $t\mapsto V\cT_0 (t)$ is norm
continuous on $\RR_+$ and 
by Theorem \ref{cond.ev.norm.cont} we have that \SG is norm continuous
for $t\geq 1$.
\end{proof}
Using the stability results obtained in the previous section and the 
spectral mapping theorem for eventually norm continuous semigroups (see \cite[Theorem IV.3.9]{en-na} for the details), we 
can prove the following stability result.
\begin{cor}\label{c.stabilita}
Assume that $A$ generates an immediately norm continuous semigroup, $\omega_0 (A)<0$ and let $\alpha\in(\omega_0 (A),0]$. If
\begin{equation}
\sup_{\omega\in\RR}\left\|\Phi (\epsilon_{\alpha +i\omega}\otimes Id)
\right\|<\frac1{\sup_{\omega\in\RR}\left\|R(\alpha +i\omega ,A)\right\|},
\end{equation}
then $\omega_0 (\cA )<\alpha\leq 0$.
\end{cor}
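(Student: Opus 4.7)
The plan is to reduce the statement to Theorem \ref{cond.stabilita} together with the eventual norm continuity supplied by Proposition \ref{normcont.delay.sg}, and then to close the gap between $s_0(\cA)$ and $\omega_0(\cA)$ via a spectral mapping argument. The outline is: (1) verify that the hypotheses of Theorem \ref{cond.stabilita} hold here, so that $s_0(\cA)<\alpha$; (2) show that \SG is eventually norm continuous; (3) invoke the spectral mapping theorem to identify $s_0(\cA)$ with $\omega_0(\cA)$.

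For step (1), since $(A,D(A))$ generates an immediately norm continuous semigroup, the spectral mapping theorem yields $s(A)=\omega_0(A)$, and together with the standard chain $s(A)\leq s_0(A)\leq\omega_0(A)$ we conclude $s_0(A)=\omega_0(A)<0$. In particular $\alpha\in(\omega_0(A),0]\subseteq(s_0(A),0]$, and the resolvent estimate assumed in the corollary is precisely the hypothesis of Theorem \ref{cond.stabilita}. Applying that theorem yields $s_0(\cA)<\alpha\leq 0$.

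For steps (2) and (3), I would invoke Proposition \ref{normcont.delay.sg}: under the standing hypotheses of the preceding sections (in particular condition (K), which for $\Phi$ of the Riemann-Stieltjes form assumed throughout Section 4 follows from the estimate carried out in Example \ref{esempi}(b)), the semigroup \SG is norm continuous for $t\geq 1$ and therefore eventually norm continuous. By the spectral mapping theorem for eventually norm continuous semigroups (\cite[Theorem IV.3.9]{en-na}) one has $s(\cA)=\omega_0(\cA)$, so the inequality $s(\cA)\leq s_0(\cA)\leq\omega_0(\cA)$ collapses to equality. Combined with step (1), this gives $\omega_0(\cA)=s_0(\cA)<\alpha\leq 0$, which is the claim.

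The proof is essentially a composition of results already in the paper, so there is no serious analytic obstacle. The one point that deserves an explicit check is compatibility of the hypotheses of Sections 4 and 5: namely, that the Riemann-Stieltjes setting of Section 4 (with $1<p<\infty$) does produce a $\Phi$ satisfying the slightly stronger condition (K) needed by Proposition \ref{normcont.delay.sg}. This is the computation recorded in Example \ref{esempi}(b), where the function $q(t)=t^{1/p'}M\,|\eta|([-1,0])$ is seen to tend to $0$ as $t\searrow 0$.
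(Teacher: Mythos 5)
Your proposal is correct and follows essentially the same route the paper intends: Theorem \ref{cond.stabilita} gives $s_0(\cA)<\alpha$ (note that the inclusion $(\omega_0(A),0]\subseteq(s_0(A),0]$ already follows from $s_0(A)\leq\omega_0(A)$, so the detour through $s(A)=\omega_0(A)$ is not needed), Proposition \ref{normcont.delay.sg} (with condition (K) checked via Example \ref{esempi}(b)) gives eventual norm continuity, and the spectral mapping theorem then yields $\omega_0(\cA)=s(\cA)\leq s_0(\cA)<\alpha\leq 0$. No gaps.
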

\begin{exa}
We consider the reaction-diffusion equation from Example \ref{parab}
in the state space $\cE:=L^r(\Omega)\times L^p\left([-1,0],L^r(\Omega)\right)$ for $1\leq r<\infty$, $1<p<\infty$.
The well-posedness follows again from the calculations in Examples \ref{esempi}(b). We show again that the solutions decay exponentially if
\begin{equation}\label{lambda1}
|c|<|\lambda_1|,
\end{equation}
extending our result from the Hilbert space case.
Unfortunately, Corollary \ref{c.stabilita} is not optimal for this problem. If we estimate the resolvent for the stability condition in Corollary \ref{c.stabilita}, we obtain that it is sufficient for the exponential stability if 
$$|c|<c_r,$$
where $c_r\leq|\lambda_1|$ is a constant depending on $r$.
To obtain estimate (\ref{lambda1}), we extend the result obtained for the Hilbert space case in two steps.
Consider first the case $r=2$. We know from Lemma \ref{spectrum} that $\sigma(\cA)$, the spectrum of $(\cA, D(\cA))$, does not depend on $p$, and that
$$s(\cA)=\omega_0(\cA)$$
since by Proposition \ref{normcont.delay.sg} $(\cA, D(\cA))$ generatates an eventually norm continuous semigroup on $\cE$. For 
$p=2$ we had a condition to obtain exponential stability, i.e., $\omega_0(\cA)<0$.
Hence, as in the Hilbert space case, we obtain that the solutions decay exponentially if (\ref{lambda1}) holds.

For the general case $\cE:=L^r(\Omega)\times L^p\left([-1,0],L^r(\Omega)\right)$, we observe first that the operator $(A,D(A))$ has compact resolvent. From \cite[Proposition 1.12(ii)]{en-na} it follows that the operator $\left(A+\Phi (\epsilon_{\lambda}\otimes Id),D(A)\right)$ has compact resolvent in $X:=L^r(\Omega)$ and thus its spectrum does not depend on $r$, see \cite[Proposition 2.6]{arendt} for the details.
Using a spectral mapping argument as before, we obtain  condition (\ref{lambda1}) for the exponential stability.
\end{exa}

\end{section}
%
%
%
%
%
%
%bibliografia
%

\end{document}